\numberwithin{equation}{section}
\newtheorem{theorem}{Theorem}[section]
\newtheorem{lemma}[theorem]{Lemma}
\newtheorem{proposition}[theorem]{Proposition}
\newtheorem{rem}[theorem]{Remark}
\renewcommand{\ge}{\geq}
\renewcommand{\le}{\leq}
\newcommand{\ind}{\mathbf{1}}
\renewcommand{\tilde}{\widetilde}
\newcommand{\cC}{{\ensuremath{\mathcal C}} }
\newcommand{\cT}{{\ensuremath{\mathcal T}} }
\DeclareMathSymbol{\leqslant}{\mathalpha}{AMSa}{"36} 
\DeclareMathSymbol{\geqslant}{\mathalpha}{AMSa}{"3E} 
\DeclareMathSymbol{\eset}{\mathalpha}{AMSb}{"3F}     
\renewcommand{\leq}{\;\leqslant\;}                   
\renewcommand{\geq}{\;\geqslant\;}                   
\newcommand{\dd}{\,\text{\rm d}}             
\newcommand{\bbE}{{\ensuremath{\mathbb E}} }
\newcommand{\bbP}{{\ensuremath{\mathbb P}} }
\newcommand{\bbZ}{{\ensuremath{\mathbb Z}} }
\newcommand{\gb}{\beta}
\newcommand{\gga}{\gamma}            
\newcommand{\gd}{\delta}
\newcommand{\gep}{\varepsilon}       
\newcommand{\go}{\omega}
\def\captionfont@{\footnotesize}
\def\captionheadfont@{\scshape}
\long\def\@makecaption#1#2{%
  \vspace{2mm}
  \setbox\@tempboxa\vbox{\color@setgroup
    \advance\hsize-6pc\noindent
    \captionfont@\captionheadfont@#1\@xp\@ifnotempty\@xp
        {\@cdr#2\@nil}{.\captionfont@\upshape\enspace#2}%
    \unskip\kern-6pc\par
    \global\setbox\@ne\lastbox\color@endgroup}%
  \ifhbox\@ne 
    \setbox\@ne\hbox{\unhbox\@ne\unskip\unskip\unpenalty\unkern}%
  \fi
  \ifdim\wd\@tempboxa=\z@ 
    \setbox\@ne\hbox to\columnwidth{\hss\kern-6pc\box\@ne\hss}%
  \else 
    \setbox\@ne\vbox{\unvbox\@tempboxa\parskip\z@skip
        \noindent\unhbox\@ne\advance\hsize-6pc\par}%
\fi
  \ifnum\@tempcnta<64 
    \addvspace\abovecaptionskip
    \moveright 3pc\box\@ne
  \else 
    \moveright 3pc\box\@ne
    \nobreak
    \vskip\belowcaptionskip
  \fi
\relax
}
\def\writefig#1 #2 #3 {\rlap{\kern #1 truecm
\raise #2 truecm \hbox{#3}}}
\title[Connective constants in high-dimension]
{Existence of a non-averaging regime for the self-avoiding walk on a high-dimensional infinite percolation cluster}
\address{CEREMADE, Place du Mar\'echal De Lattre De Tassigny
75775 PARIS CEDEX 16 - FRANCE}
\email{lacoin@ceremade.dauphine.fr}
\author{Hubert Lacoin}
\begin{document}

\begin{abstract}
Let $Z_N$ be the number of self-avoiding paths of length $N$ starting from the origin on the infinite  cluster obtained after performing Bernoulli percolation  on $\bbZ^d$ with parameter $p>p_c(\bbZ^d)$. The object of this paper is to study the connective constant of the dilute lattice
$\limsup_{N\to \infty} Z_N^{1/N}$, which is a non-random quantity. We want to investigate if  the  inequality 
$\limsup_{N\to \infty} (Z_N)^{1/N} \le \lim_{N\to \infty} \bbE[Z_N]^{1/N}$ obtained with the Borel-Cantelli Lemma is strict or not. In other words, 
we want to know the the quenched and annealed versions of the connective constant are the same. On a heuristic level, this indicates whether or not 
localization of the trajectories occurs.
We prove that when $d$ is sufficiently large there exists $p^{(2)}_c>p_c$ such that the inequality is strict for $p\in (p_c,p^{(2)}_c)$.\\
2000 \textit{Mathematics Subject Classification: 82D60, 60K37, 82B44.}  \\
  \textit{Keywords: Percolation, Self-avoiding walk, Random media, Polymers, Disorder relevance.}

\end{abstract}

\maketitle

\section{Introduction}

In this paper we continue a study initiated in \cite{cf:conn} concerning self-avoiding walk on the infinite percolation cluster obtained 
after performing supercritical percolation on $\bbZ^d$. This problem has been studied extensively in the physics litterature, using non-rigorous methods
(see \cite{cf:CK} for the first paper on the subject, and \cite{cf:CR, cf:H2, cf:HM, cf:BKC} for later contributions), but its understanding 
from a mathematical point view remains very weak, the main reason being that self-avoiding walk on $\bbZ^d$ is, in many respects 
already a big challenge for mathematician (see \cite{cf:Slade} for a review).

\medskip

Our starting point is to assume that some of the properties of the walk, such as the replica overlap 
(i.e.\ the mean proportion of edges shared by two independent trajectories) and the end-to-end distance can be deduced from the asymptotic 
behavior of the partition function $Z_N$, which is simply the number of 
open self-avoiding path of length $N$ from the origin. This belief is somehow ascertained by both the physics literature 
(e.g. see \cite{cf:LDM}) and analogies  with rigorous results obtained for directed polymers in a random environment \cite{cf:Com},
but has not yet been brought on rigorous ground (and we will not do it in this paper). Hence our main object of study will be
the growth rate $\limsup_{N\to \infty} (Z_N)^{1/N}$.

\medskip

In \cite{cf:conn} we have aproved the existence of a quenched connective constant for the percolation cluster, or in other words, 
that the upper growth-rate of $Z_N$ is not a random variable. We also proved that in two dimension, this upper-growth rate is always strictly smaller 
than the growth rate of the expected value, underlining a localization phenomenon.
Our aim in this paper is to show that there is a phase where localization occur also in high dimension: 
that when $d$ is large and $p$ is close to the percolation threshold, $Z_N$ does not grow as fast as its mean value.
Our result apparently contradicts some of the prediction made in the litterature. For instance in  \cite{cf:H2}, \cite{cf:CR} and many pait is claimed that 
disorder never modifies the behavior of the trajectories above the percolation threshold. The
reason for this contradiction might be that some author may have considered the ``annealed model'' (where averaging with respect to the percolation cluster 
is performed) which is mathematically trivial.

\medskip

On the other-hand our result  agrees with the predictions present in \cite{cf:LDM}.
Furthermore according to Le Doussal and Machta \cite{cf:LDM}, 
this localization phenomenon does not occur for small dilution, i.e.\ when $p$ is close to one. 
This means that there is a non-trivial transition in $p$ separating a phase where trajectories are delocalized from one where the are delocalized. 
On the mathematical level, this remains a challenging conjecture.

\subsection{Model and results}

We consider $\mathcal S_N$ the set of self-avoiding path of length $N$ starting from the origin in $\bbZ^d$ which is equipped 
with its usual lattice structure.
The notation $|\cdot|$ denotes the graph distance in $\bbZ^d$.

\begin{multline}
 \mathcal S_N:=\{ (S_n)_{n\in [0,N]}\ | \ S_0=0,\ \forall n\in [0,N-1], |S_{n}-S_{n+1}|=1, \\
 \forall i\ne j \in[0,N], S_i\ne S_j\}.
\end{multline}

Let $(\go_e)_{e\in E_d}$ be a field of IID Bernoulli random variable of parameter $p$ indexed by the edges of $\bbZ^d$. 
We denote by $\bbP_p$ or $\bbP$ the associated law. When $\go_e=1$ we say that the edge is open.
We say that a lattice path in $\bbZ^d$ is open if all the edges that composes it are open.

\medskip

We are interested in the asymptotic behavior of the number of self-avoiding open paths of length $N$ starting from the origin

\begin{equation}
 Z_N:=\sum_{S\in \mathcal S_N} \ind_{\{S \text{ is open for $\go$}\}}.
\end{equation}

We can define an analogous quantity $Z_{N,x}$  by counting open  self-avoiding path starting from $x\in \bbZ^d$ instead of the origin.
Of course $Z_{N,x}$ is equal to zero for large $N$ if $x$ lies in a finite cluster of open edges.
For the reason, we focus only on the supercritical regime of percolation $p>p_c$ ($p_c$ denoting the percolation threshold) 
where a.s.\ there is a unique infinite connected component of open edges
(see e.g.\ the first chapters of \cite{cf:Grimm} for an introduction to percolation).
We call $\cC$ the unique infinite percolation cluster.

\medskip

In  \cite{cf:conn}, we introduced the notion of connective constant for the dilute lattice.
Recall that $\mu_d=\mu_d(1)$ the connective constant of $\bbZ^d$ is defined by 
\begin{equation}
\mu_d:=\lim_{N\to \infty} |\mathcal S_N|^{1/N}
\end{equation}
Here $|\cdot|$ denote the cardinal of a set and we will keep this notation throughout the paper as it brings no confusion.

\begin{proposition}[\cite{cf:conn} Proposition 1.1]\label{taex}
For $p>p_c$, for every $x\in \mathcal C$, the limit
\begin{equation}\label{limsupp}
 \limsup_{N\to \infty} (Z_{N,x})^{\frac 1 N},
\end{equation}
does not depend on $x$ and is $\bbP$-a.s.\ constant.
We call this limit the \textit{quenched} connective constant of the dilute lattice and denote it by $\mu_d(p)$.
It satisfies the inequality
\begin{equation}\label{annqu}
 \mu_d(p)\le p \mu_d,
\end{equation}
where 
$$\mu_d:=\mu_d(1)=\lim_{N\to \infty} |\mathcal S_N|^{1/N},$$
is the connective constant of $\bbZ^d$.
We call
\begin{equation}
 \bbE_p \left[Z_N\right]^{\frac1N}= p\mu_d(1).
\end{equation}
the \textit{annealed} connective constant.

\medskip

Moreover, the ratio between quenched and annealed connective constant $$\mu_d(p)/p\mu_d(1)$$ is a 
non-decreasing function of $p$ on $(p_c,1]$.

\medskip

In particular there exists $p_c^{(2)}\in[p_c,1]$ such that 
$$\mu_d(p)<p\mu_d(1) \text{ if } p<p_c^{(2)} \quad \text{and} \quad \mu_d(p)=p\mu_d(1) \text{ if } p>p_c^{(2)}.$$

\end{proposition}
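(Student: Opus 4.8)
The plan is to derive every assertion from one elementary first--moment estimate, used twice. \emph{First--moment lemma:} if $G$ is an infinite connected graph with a marked vertex $o$ and maximal degree at most $2d$, if $c^G_N$ denotes the number of length--$N$ self--avoiding paths from $o$ in $G$, and $Z^G_N(r)$ the number of those whose edges are all open under an IID Bernoulli$(r)$ field on the edges of $G$, then $\bbE[Z^G_N(r)]=r^N c^G_N$, hence $\bbP(Z^G_N(r)\ge\lambda^N)\le r^N c^G_N\lambda^{-N}$; since $\limsup_N(c^G_N)^{1/N}\le 2d$, picking $\lambda$ a little above $r\,\limsup_N(c^G_N)^{1/N}$ makes the right--hand side summable, and Borel--Cantelli gives $\limsup_N(Z^G_N(r))^{1/N}\le r\,\limsup_N(c^G_N)^{1/N}$ almost surely. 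Taking $G=\bbZ^d$, $o=0$, $r=p$ this is exactly \eqref{annqu}, and $\bbE_p[Z_N]=p^N|\mathcal S_N|$ gives $\bbE_p[Z_N]^{1/N}\to p\mu_d$, the annealed constant.

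\emph{Constancy of \eqref{limsupp} and base--point independence.} This is the one step that is not purely formal. Because $(\go_e)_{e\in E_d}$ is ergodic under $\bbZ^d$--translations, it suffices to check that $\limsup_N(Z_{N,x})^{1/N}=\limsup_N(Z_{N,x'})^{1/N}$ $\bbP$--a.s.\ for all $x,x'\in\cC$. I would prove this by fixing an open self--avoiding path from $x'$ to $x$ and prepending it to open self--avoiding paths issued from $x$; the point to control is that this detour through a finite segment costs only a sub--exponential factor, which can fail at a cut--vertex of $\cC$ (a dangling end). I would therefore first establish the identity for $x,x'$ ranging over a ``backbone'' subset of $\cC$, where rerouting is unobstructed and a standard combinatorial count bounds the fraction of length--$N$ paths forced to meet the detour, and then extend it to an arbitrary $x\in\cC$ by a bounded--length comparison to a backbone point. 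Ergodicity then makes the (now translation--invariant) $\limsup$ $\bbP$--a.s.\ constant, equal to $\mu_d(p)$, and independent of $x$.

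\emph{Monotonicity of $\mu_d(p)/p\mu_d$.} Fix $p<p'$ in $(p_c,1]$ and couple: let $\go'$ have law $\bbP_{p'}$, let $\eta$ be an independent Bernoulli$(p/p')$ field, and set $\go:=\go'\cdot\eta$ (edgewise product), so $\go$ has law $\bbP_p$ and $\cC(\go)\subseteq\cC(\go')=:\cC'$. For a.e.\ $\go'$ with $0\in\cC'$, every $\go$--open self--avoiding path from $0$ lies in the now fixed graph $\cC'$, so $Z_{N,0}(\go)$ counts the length--$N$ self--avoiding paths from $0$ in $\cC'$ whose edges are $\eta$--open, while the previous step gives $\limsup_N(c^{\cC'}_N)^{1/N}=\mu_d(p')$. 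Applying the first--moment lemma with $G=\cC'$ and $r=p/p'$ yields $\limsup_N(Z_{N,0}(\go))^{1/N}\le (p/p')\,\mu_d(p')$ a.s.; since $\go$ has law $\bbP_p$, the left--hand side equals $\mu_d(p)$ on the positive--probability event $\{0\in\cC(\go)\}$, hence $\mu_d(p)\le (p/p')\,\mu_d(p')$, i.e.\ $p\mapsto\mu_d(p)/p\mu_d$ is non--decreasing on $(p_c,1]$.

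\emph{Existence of $p_c^{(2)}$, and the main difficulty.} Set $p_c^{(2)}:=\sup\{p\in(p_c,1]:\mu_d(p)<p\mu_d\}$ with $\sup\emptyset:=p_c$; since the ratio $\mu_d(p)/p\mu_d$ is non--decreasing, $\le 1$ by \eqref{annqu}, and $=1$ at $p=1$, it is $<1$ for $p<p_c^{(2)}$ and $=1$ for $p>p_c^{(2)}$, which is the claim. The hard part of this scheme is the constancy step: $Z_N$ is not submultiplicative because self--avoidance obstructs concatenation of paths, and $\cC$ has an irregular local geometry, so removing the base--point dependence of the $\limsup$ is where the real work lies; granting that, the inequality, the annealed identity, and the monotonicity all reduce to the single first--moment lemma.
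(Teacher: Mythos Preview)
The paper does not prove this proposition: it is quoted verbatim from \cite{cf:conn} and used as background. There is therefore no ``paper's own proof'' to compare against here; the result is imported.

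That said, your outline is sound and matches in spirit what one finds in \cite{cf:conn}. The first--moment/Borel--Cantelli argument for \eqref{annqu} and the computation $\bbE_p[Z_N]=p^N|\cS_N|$ are exactly right. Your coupling proof of monotonicity is clean and correct: writing $\go=\go'\cdot\eta$ with $\eta$ Bernoulli$(p/p')$ independent of $\go'$, conditioning on $\go'$ turns $Z_{N,0}(\go)$ into an $\eta$--thinning of the self-avoiding paths in the fixed graph of $\go'$--open edges, and your first--moment lemma applied to that graph (with $r=p/p'$) gives $\mu_d(p)\le (p/p')\mu_d(p')$ on the positive--probability event $\{0\in\cC(\go)\}\subset\{0\in\cC(\go')\}$. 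The deduction of the threshold $p_c^{(2)}$ from monotonicity and $\mu_d(1)/\mu_d=1$ is immediate.

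You correctly identify the one genuinely nontrivial step: independence of the base point $x\in\cC$ (and hence a.s.\ constancy by ergodicity). Your sketch---prepend a fixed open path from $x'$ to $x$ and argue the cost is subexponential---is the right idea, and your caveat about cut-vertices/dangling ends is exactly the obstruction one has to handle. The ``backbone then extend'' workaround you propose is a reasonable route, but as written it is only a plan; this is the part that needs an actual argument (and is where \cite{cf:conn} does the work). Everything else in your proposal is complete.
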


\begin{rem}\rm
The notion of connective constant for non-transitive graphs has also recently been studied in a non-random setup ; in \cite{cf:GrimmLi} a universal 
lower-bound is given for the
for the connective constant of $d$-regular graphs.
\end{rem}

Whether the inequality \eqref{annqu} is strict or not is related, at least on a heuristic level to trajectorial properties of
the self-avoiding walk on $\mathcal C$. The Self-Avoiding Walk of length $N$ on $\mathcal C$ is the process defined on $\mathcal S_N$ 
by the probability law

\begin{equation}
\pi^\go_N(S):=\frac{1}{Z_N}\ind_{S \text{ is open }}.
\end{equation}

The general idea is that if $Z_N$ behaves asymptotically like its expected value, it means that the environment is spatially averaging so that
the self-avoiding walk in the inhomogeneous medium keeps the features of self-avoiding walk on $\bbZ^d$ (see e.g.\ \cite{cf:Flo, cf:Slade} for conjectures and mathematically proved results).
On the other hand if $Z_N\ll \bbE[Z_N]$, and \textit{a fortiori} if the two quantities have a different behavior on the exponential scale (\textit{i.e.} if $\mu_d(p)< p \mu_d(1)$)
one should observe localization of the trajectories under $\pi^\go_N$ 
(see \cite{cf:Com} and references therein for the case directed polymers, and \cite{cf:LDM} for physicists prediction).

\medskip

An important issue is then to decide whether a genuine phase transition occurs at $p_c^{(2)}$ \textit{i.e.} if $p^{(2)}_c\notin \{p_c,1\}$.
In \cite{cf:conn}, we proved that in dimension $2$, the inequality \eqref{annqu} is always strict for all values of $p<1$ so that there is no phase transition ($p^{(2)}_c=1$). Le Doussal and Machta 
\cite{cf:LDM} believe that this is the case 
also for $d=3$, and we agree with this conjecture although it should be difficult to prove with available tools 
(see the introduction of \cite{cf:conn} for more discussions).
On the contrary, when $d\ge 4$, the same authors conjectured that for small edge dilution, the disorder is irrelevant: in other words that for $p$ close to one $\mu_d(p)=p\mu_d(1)$ or that  $p^{(2)}_c<1$. Note that the conjectures in \cite{cf:LDM} are not formulated for the connective constant 
but in terms of trajectorial properties.

\medskip

In this paper we focus on the question: is $p^{(2)}_c>p_c$ in general?  We give a positive answer to this question in high dimension.
The strategy we adopt to prove such a result presents some similarities with the one we used in \cite{cf:HDP} to solve an analogous question for 
the problem of oriented percolation: we prove an asymptotic lower bound on $p_c^{(2)}$ which is larger than the asymptotic development in $(2d)^{-1}$ of
$p_c$.

\medskip

The asymptotic development for the percolation threshold up to the third order has been computed by Hara and Slade \cite{cf:HS} using the lace expansion.
They proved that
$$p_c:=\frac{1}{2d}+\frac{1}{(2d)^2}+\frac{7}{2}\frac{1}{(2d)^3}+O(d^{-4}).$$
See also \cite{cf:SVdH} for a related result and a bibliography on this topic.

\medskip

Let us present now our main result 

\begin{theorem}\label{mainres}
 Given $\gep>0$, when $d$ is large enough and 
 $$p\le \frac{1}{2d}+\frac{1}{(2d)^2}+(2+3\log 2-\gep)\frac{1}{(2d)^3}$$
 then
 \begin{equation}
  \mu_d(p)<p\mu_d.
 \end{equation}
 
As a consequence when $d$ is large enough, 
$$p_c^{(2)}> p_c.$$
\end{theorem}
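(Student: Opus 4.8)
\medskip
\noindent\textbf{Proof strategy.}
The plan is to prove the strict inequality $\mu_d(p)<p\mu_d$ by a \emph{fractional moment} estimate on $Z_N$, the only serious input being the Hara--Slade expansion $p_c=\tfrac1{2d}+\tfrac1{(2d)^2}+\tfrac72\tfrac1{(2d)^3}+O((2d)^{-4})$ recalled above together with the classical expansion $\mu_d=2d-1-(2d)^{-1}+O((2d)^{-2})$. Fix $\theta\in(0,1)$. If one can exhibit $\gd>0$ and $C<\infty$ with
\begin{equation}\label{prop:fracmom}
\bbE\!\left[Z_N^{\theta}\right]\le C\,\big((p\mu_d)e^{-\gd}\big)^{\theta N}\qquad\text{for all }N,
\end{equation}
then by Markov's inequality applied to $Z_N^\theta$, $\bbP\!\left(Z_N\ge (p\mu_d)^N e^{-\gd N/2}\right)\le C e^{-\theta\gd N/2}$, which is summable; Borel--Cantelli together with Proposition~\ref{taex} then give $\mu_d(p)\le (p\mu_d)e^{-\gd/2}<p\mu_d$ almost surely. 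The consequence $p_c^{(2)}>p_c$ follows at once: since $2+3\log 2>\tfrac72$, for every small enough $\gep>0$ the stated range of $p$ contains a right-neighbourhood of $p_c$ once $d$ is large, and there $\mu_d(p)<p\mu_d$; monotonicity of $p\mapsto \mu_d(p)/(p\mu_d)$ (Proposition~\ref{taex}) then forces $p_c^{(2)}$ above that neighbourhood. So everything reduces to~\eqref{prop:fracmom}.

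\medskip
\noindent To estimate $\bbE[Z_N^\theta]$ I would coarse-grain. Fix a large scale $\ell=\ell(d)$, tile $\bbZ^d$ into cubes of side $\ell$, and attach to each $S\in\mathcal S_N$ the sequence $\hat S$ of cubes visited at times $0,\ell,2\ell,\dots$; the number of admissible $\hat S$ is at most $e^{(N/\ell)\log(Cd)}$. Writing $Z_N=\sum_{\hat S}Z_N(\hat S)$ and using $(\sum_i a_i)^\theta\le\sum_i a_i^\theta$ for $\theta\in(0,1)$, one gets $\bbE[Z_N^\theta]\le\sum_{\hat S}\bbE[Z_N(\hat S)^\theta]$, so it suffices to gain, per coarse step, a multiplicative factor beating $(Cd)^{1/\ell}$. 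For a fixed $\hat S$ one bounds $\bbE[Z_N(\hat S)^\theta]$ by H\"older against a tilted percolation measure $\tilde\bbP$,
\begin{equation}
\bbE\!\left[Z_N(\hat S)^\theta\right]\le\Big(\tilde\bbE\big[(d\bbP/d\tilde\bbP)^{1/(1-\theta)}\big]\Big)^{1-\theta}\big(\tilde\bbE[Z_N(\hat S)]\big)^{\theta},
\end{equation}
where $\tilde\bbP$ is built as a product of \emph{local} modifications of the environment, one per cube crossed by $\hat S$, each depending on boundedly many (in $\ell$) edges, chosen so that on each such cube the relative-entropy cost stays bounded while the \emph{annealed} number of crossings of the cube drops by a definite factor. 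Since the modifications on distinct cubes are independent, both the cost and the reduced count factorise over the $\sim N/\ell$ coarse steps.

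\medskip
\noindent The quantitative core is to choose this local modification and carry out the resulting entropy/energy optimisation keeping every term up to order $(2d)^{-3}$. The input here is a sufficiently precise description of the local law of $\cC$ near $p_c$ in large dimension --- controlling, to the relevant order, the number of ways a path sitting on $\cC$ can branch or make an excursion inside a cube --- so that a well-placed bounded modification is cheap in entropy yet genuinely depresses the annealed in-cube count in a way that reflects the near-criticality of $p$. The optimal such modification, weighed against the coarse-trajectory entropy $\tfrac1\ell\log(Cd)$, is what singles out the third-order constant $2+3\log 2$: for $p\le \tfrac1{2d}+\tfrac1{(2d)^2}+(2+3\log 2-\gep)\tfrac1{(2d)^3}$ and $d$ large, the net exponential rate in~\eqref{prop:fracmom} falls strictly below $\theta\log(p\mu_d)$, i.e.\ $\gd>0$.

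\medskip
\noindent The hard part is exactly this last construction: one needs a modification of the percolation configuration supported on only finitely many edges per block --- so that the H\"older cost is $O(1)$ there --- whose effect on the annealed partition function nonetheless tracks how sparse $\cC$ is near $p_c$; a uniform lowering of $p$, or any modification blind to this sparseness, is either far too expensive or yields no gain, so the tilt must be tailored to the local structure of $\cC$ at criticality. Pinning down that structure with enough precision and then performing the exact optimisation that produces $2+3\log 2$ --- to be compared against the $\tfrac72$ in the expansion of $p_c$ --- is where essentially all the work lies; the Borel--Cantelli reduction, the coarse-graining bookkeeping, and the passage to an almost-sure statement are routine.
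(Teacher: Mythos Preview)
Your proposal is not a proof: the entire technical content is deferred to a ``local modification'' whose existence, structure, and exact effect you never specify. You assert that an optimal block tilt will ``single out'' the constant $2+3\log 2$, but give no mechanism by which this number would emerge from a fractional-moment/coarse-graining scheme. Fractional moments with change of measure are a legitimate tool in disordered systems, and they can sometimes produce sharp constants, but only when the local tilt is explicitly constructed and its cost and gain are computed; here you have done neither. As written, the outline would equally well ``prove'' any third-order constant one cared to name.

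The paper's argument is completely different and, crucially, the constant $2+3\log 2$ has a transparent combinatorial origin there. One works under the \emph{size-biased} law $\dd\tilde\bbP/\dd\bbP=W_N$, observes (Lemma~\ref{sizebi}) that exponential decay of $W_N$ under $\bbP$ follows from exponential growth of $W_N$ under $\tilde\bbP$, and then uses the \emph{spine construction} (Lemma~\ref{spine}): under $\tilde\bbP$ the environment is obtained by opening a uniformly random self-avoiding path $S$ and sprinkling i.i.d.\ Bernoulli$(p)$ edges elsewhere. The heart of the proof (Proposition~\ref{mainprop}) is then a \emph{lower bound} on $\tilde Z_N(S,\go)$: one exhibits three families of short ``bridges'' over the spine (two-edge detours at bends, one-edge shortcuts at $U$-turns, three-edge detours over straight edges), shows that for a good spine each family has about $(2d)^{-2}N$ non-overlapping open members, and that choosing subsets of them yields at least $2^{(3-\gep)(2d)^{-2}N}$ open self-avoiding paths. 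Combining this with $|\mathcal S_N|=(\mu_d+o(1))^N$ and the expansion $\mu_d=2d-1-(2d)^{-1}+O(d^{-2})$ gives the threshold
\[
p<\mu_d^{-1}\,2^{(3-\gep)(2d)^{-2}}=\tfrac{1}{2d}+\tfrac{1}{(2d)^2}+(2+3\log 2-\gep)\tfrac{1}{(2d)^3}+O(d^{-4}),
\]
so the $3\log 2$ is literally $\log(2^3)$ from the three bridge types and the $2$ comes from the $\mu_d^{-1}$ expansion. Nothing in your sketch explains why a H\"older/coarse-graining optimisation would land on this same number; absent an explicit tilt and an explicit computation, the proposal does not establish the theorem.
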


\subsection{Ideas behind the proof and interpretation of the result} 

Our method to obtain a lower bound on $p_c^{(2)}$ relies on three ideas:
the first one is to consider the size biased measure  where the 
probability of a given environment is proportional to $\bbP(\go)Z_N(\go)$.
This gives us a nice characterization of the strong disorder regime $p<p_c^{(2)}$: see Lemma \ref{sizebi}.

\medskip

The second idea is to use a special construction of the size biased measure sometimes referred to as the {\sl spine construction}.
It says that the environment under the size-biased law can be obtained by opening a self avoiding path of length $N$ chosen uniformly at random (the spine), 
and opening every other edges in $\bbZ^d$ independently with probability $p$: see Lemma \ref{spine}

\medskip

Finally, and this is the most important step, we use this spine construction 
to show that  additions of edges around the spines can generate a lot of open paths, 
showing that the size-biased measure and the original one are very different when $p$ is small (Proposition \ref{mainprop}).

\medskip

All these three ideas were used in an earlier work concerning oriented percolation \cite{cf:HDP}, 
and the way to proceed for the two first step is exactly identical. However, here the final step is much
more involved and requires some heavy-machinery for several reasons:  the possibility of interaction between the walk and its past, 
the modification of the length when adding "bridges on the walk" etc...

\medskip

At first sight, after reading our proof, one could think that the asymptotic lower-bound we find for $p_c^{(2)}$ 
is larger than $p_c$
for accidental reasons and that it might not be satisfied for some other type of high dimensional lattice. 
This is however not the case: the fundamental reason for the difference between the two is that the contribution of ``square of open edges'' (i.e.\ of 
regions where four edges forming a square are open)
is more important under the size-biased measure than under the original one. In the high dimensional limit, the contribution of these open square gives the leading asymptotic term for  $(Z_N)^{1/N}$.
Thus our result can be expected to be true for percolation models in the mean-field limit (see for instance \cite{cf:HDP} where the same result holds for directed percolation essentially for the same reasons).
We believe that $p^{(2)}_c>p_c$ in all dimensions but we currently have no argument to justify it for finite $d$.
Note finally that we did not try to explicit what $d$ large enough in Theorem \ref{mainres} means. The reason for that is that 
quantitative estimates would be quite hard to derive and in any case, very suboptimal.

\section{Decomposing the proof of Theorem \ref{mainres}}

\subsection{The size biasing}

The size biazing technique consist in studying the process under a new measure, which gives a larger probability to 
environments $\go$ for which the partition function $Z_N$ is larger.
We introduce $W_N$ the renormalized partition function

\begin{equation}
 W_N(\go):=\frac{1}{p^N|\mathcal S_N|}Z_N(\go).
\end{equation}
Note that the inequality \eqref{annqu} is strict if and only if $W_N$ decays exponentially fast.

\medskip

We define $\tilde \bbP$ the size biased-law, which is absolutely continuous with respect to $\bbP$ and whose derivative is
\begin{equation}
 \frac{\dd \tilde \bbP}{\dd \bbP}(\go):= W_N(\go).
\end{equation}

The exponential decay of $W_N(\go)$ under $\bbP$ corresponds more or less to its exponential growth under $\tilde \bbP$.
This the content of the following Lemma.

\begin{lemma}\label{sizebi}
If there is a constant $c$ satisfying  
\begin{equation}\label{lhypo}
 \tilde \bbP[ W_N\le e^{cN} ]\le e^{-cN}, 
\end{equation}
then   $\mu_d(p)<p\mu_d$.
\end{lemma}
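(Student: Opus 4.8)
The plan is to turn the assumed exponential growth of $W_N$ under the size-biased law $\tilde\bbP$ into an exponential \emph{decay} of $W_N$ under $\bbP$ by a change of measure, and then to conclude by Borel--Cantelli together with Proposition \ref{taex}; only one direction of the equivalence ``$\mu_d(p)<p\mu_d\iff W_N$ decays exponentially'' is needed. First I would record the elementary facts: since every $S\in\mathcal S_N$ uses exactly $N$ edges, $\bbE[Z_N]=p^N|\mathcal S_N|$, so $\bbE[W_N]=1$ and $\dd\tilde\bbP/\dd\bbP=W_N$. Note that $\bbP$ is \emph{not} absolutely continuous with respect to $\tilde\bbP$, since $\{W_N=0\}$ (the origin lying in a finite open cluster) has positive $\bbP$-probability; however, for any event $A\subseteq\{W_N>0\}$ one still has the identity
\begin{equation}
\bbP[A]=\bbE[\ind_A]=\bbE\left[\frac{\ind_A}{W_N}\,W_N\right]=\tilde\bbE\left[\frac{\ind_A}{W_N}\right],
\end{equation}
obtained by writing $\ind_A=(\ind_A/W_N)\cdot W_N$ (valid pointwise on $A$, where $W_N>0$) and integrating against $\tilde\bbP$. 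This restriction to $\{W_N>0\}$ is the one point in the argument that requires a moment's care.

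Next I would fix a parameter $\delta\in(0,c)$ and apply the identity with $A=\{W_N\ge e^{-\delta N}\}\subseteq\{W_N>0\}$, splitting the right-hand side according to whether $W_N\le e^{cN}$ or $W_N>e^{cN}$. On $\{e^{-\delta N}\le W_N\le e^{cN}\}$ one has $1/W_N\le e^{\delta N}$, while the $\tilde\bbP$-probability of this set is at most $\tilde\bbP[W_N\le e^{cN}]\le e^{-cN}$ by the hypothesis \eqref{lhypo}; on $\{W_N>e^{cN}\}$ one has $1/W_N<e^{-cN}$ and the $\tilde\bbP$-probability is at most $1$. Hence $\bbP[W_N\ge e^{-\delta N}]\le e^{-(c-\delta)N}+e^{-cN}$. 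Choosing $\delta=c/2$, this is bounded by $2e^{-cN/2}$, which is summable in $N$, so Borel--Cantelli gives that $\bbP$-almost surely $W_N<e^{-cN/2}$ for all $N$ large enough.

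Finally, since $Z_N=p^N|\mathcal S_N|\,W_N$, we obtain $(Z_N)^{1/N}<p\,|\mathcal S_N|^{1/N}e^{-c/2}$ for all large $N$, $\bbP$-a.s., whence $\limsup_{N\to\infty}(Z_N)^{1/N}\le p\mu_d\,e^{-c/2}<p\mu_d$. As $Z_N=Z_{N,0}$ and, by Proposition \ref{taex}, $\limsup_N(Z_{N,0})^{1/N}$ equals the deterministic constant $\mu_d(p)$ on the positive-probability event $\{0\in\cC\}$, this forces $\mu_d(p)<p\mu_d$. I do not expect a genuine obstacle in this lemma itself — the argument is short, its only slightly delicate point being the change-of-measure identity above valid only on $\{W_N>0\}$, and its only external input the already established Proposition \ref{taex}. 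If \eqref{lhypo} were available only for a single value of $N$ rather than for all large $N$, the extra ingredient would be a sub-multiplicativity/concatenation step, decomposing a self-avoiding path of length $kN$ into $k$ blocks of length $N$ to propagate the one-scale estimate to all scales; under the hypothesis as stated this is not needed, and the substantial work of the paper lies instead in verifying \eqref{lhypo} (Proposition \ref{mainprop}).
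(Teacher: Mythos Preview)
Your proof is correct and follows essentially the same route as the paper's: change of measure to convert the $\tilde\bbP$-tail bound into a $\bbP$-tail bound $\bbP[W_N\ge e^{-cN/2}]\le 2e^{-cN/2}$, then Borel--Cantelli. The only cosmetic difference is that the paper handles the piece $\{W_N>e^{cN}\}$ by a direct appeal to Markov's inequality (using $\bbE[W_N]=1$), whereas you handle it via the same change-of-measure identity with the pointwise bound $1/W_N<e^{-cN}$; these are of course the same computation. Your explicit remark that the identity $\bbP[A]=\tilde\bbE[\ind_A/W_N]$ only holds for $A\subseteq\{W_N>0\}$ is a welcome point of care that the paper leaves implicit.
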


\begin{proof}
The result and proof are similar to Proposition 4.2 in \cite{cf:HDP}, and we include the proof here just for the sake of completeness.
The inequality $\mu_d(p)<p\mu_d$ is equivalent to the almost sure exponential decay of $W_N$.

\medskip

Assume that \eqref{lhypo} holds. Then
from the definition of $\tilde \bbP$ we have
\begin{multline}
\bbP[ W_N\in [e^{-cN/2},e^{cN}]]
=\tilde \bbE\left[(W_N)^{-1}\ind_{W_N\in [e^{-cN/2},e^{cN}]}\right]
\\
\le e^{c/2N}\tilde \bbP[ W_N\le e^{cN} ]\le e^{-cN/2}.
\end{multline}
The Markov inequality implies that $\bbP[ W_N\ge e^{cN}]\le e^{-cN}$, and hence 
by the Borel-Cantelli Lemma, we have
\begin{equation}
 \limsup_{N\to \infty} (W_N)^{1/N}=e^{-c}.
\end{equation}

\end{proof}

\subsection{The spine method}

The size biased measure can be obtained from the original one by opening all the edges along a randomly chosen self avoiding path.
Similar constructions have been used for quite a while for the study of branching structure (see \cite{cf:LPP}) or directed polymers \cite{cf:Birk}.

\medskip

Given a path $S\in \mathcal S_N$ and an environment $\go$ we define the environment $\tilde \go(S,\go)$ with spine $S$ by
\begin{equation}
\tilde \go_e:=
\begin{cases}
1 \quad \text{ if } e\in S,\\
\go_e \quad \text{ if } e\notin S.
\end{cases}
\end{equation}

Then set 

\begin{equation}
\tilde Z_N(S,\go):= \sum_{S' \in \mathcal S_N} \ind_{S' \ \text{ is open for $\tilde \go$}}.
\end{equation}

\begin{lemma}\label{spine}
The law of $Z_N(\go)$ under $\tilde \bbP$, is the same as the law of 
$\tilde Z_N(S,\go)$ under $\pi_N\times \bbP$.
\end{lemma}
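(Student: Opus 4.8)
The plan is to compute both laws explicitly and check that they assign the same probability to every event of the form $\{Z_N = k\}$, or more precisely to show that the joint structure "environment distribution as seen after biasing" coincides with "pick a uniform spine, then sample the rest i.i.d.". First I would unwind the definition of $\tilde\bbP$: for any bounded test function $F$ of the environment,
\begin{equation}
\tilde\bbE[F(\go)] = \bbE[F(\go) W_N(\go)] = \frac{1}{p^N |\mathcal S_N|}\, \bbE\!\left[F(\go) \sum_{S\in\mathcal S_N} \ind_{\{S\ \text{open for}\ \go\}}\right] = \frac{1}{p^N|\mathcal S_N|} \sum_{S\in\mathcal S_N} \bbE\!\left[F(\go)\ind_{\{S\ \text{open}\}}\right].
\end{equation}
So the size-biased law is the mixture, over $S$ chosen uniformly in $\mathcal S_N$, of the laws $\bbP(\,\cdot \mid S\ \text{open})$, each weighted by $\bbP(S\ \text{open})/(p^N|\mathcal S_N|) = p^{|S|}/(p^N |\mathcal S_N|)$; since $S$ has exactly $N$ edges and each is open with probability $p$ independently, $\bbP(S\ \text{open}) = p^N$, so the weights are exactly $1/|\mathcal S_N|$, i.e.\ uniform.

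Next I would identify $\bbP(\,\cdot\mid S\ \text{open})$ with the law of $\tilde\go(S,\go)$ under $\bbP$. Under $\bbP$, conditioning on the event that the $N$ edges of $S$ are all open leaves the remaining edges i.i.d.\ Bernoulli($p$) and forces the edges of $S$ to be open — which is precisely the definition of $\tilde\go(S,\go)$ when $\go$ is a fresh Bernoulli($p$) field (the values $\go_e$ for $e\in S$ are simply overwritten by $1$). Hence for each fixed $S$, the law of $\go$ under $\bbP(\cdot\mid S\ \text{open})$ equals the law of $\tilde\go(S,\go)$ under $\bbP$. Combining with the previous paragraph,
\begin{equation}
\tilde\bbE[F(\go)] = \frac{1}{|\mathcal S_N|}\sum_{S\in\mathcal S_N} \bbE\big[F(\tilde\go(S,\go))\big] = (\pi_N\times\bbP)\text{-expectation of } F(\tilde\go(S,\go)).
\end{equation}
Finally, applying this with $F(\go) = g(Z_N(\go))$ for an arbitrary bounded $g$, and noting that $Z_N(\tilde\go(S,\go)) = \tilde Z_N(S,\go)$ by definition, gives that the law of $Z_N$ under $\tilde\bbP$ equals the law of $\tilde Z_N(S,\go)$ under $\pi_N\times\bbP$, which is the claim.

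I do not expect any serious obstacle here; the only point requiring a little care is the bookkeeping of the Radon--Nikodym weights — one must check that the factor $p^{|S|}$ coming from $\bbP(S\ \text{open})$ cancels exactly against the $p^N$ in the denominator of $W_N$, which works precisely because every $S\in\mathcal S_N$ has exactly $N$ edges. A secondary subtlety is that $\tilde\go(S,\go)$ overwrites the coordinates of $\go$ on $S$ rather than conditioning, but this is harmless since under the conditional law $\bbP(\cdot\mid S\ \text{open})$ those coordinates are deterministically $1$ anyway, so the two descriptions of the measure agree.
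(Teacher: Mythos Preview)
Your argument is correct and complete: expand the size-biased expectation, use independence of edges to factor $\bbP(S\text{ open})=p^N$ so that the mixing weights become uniform $1/|\mathcal S_N|$, and then identify $\bbP(\,\cdot\mid S\text{ open})$ with the law of $\tilde\go(S,\go)$. The paper does not actually prove this lemma---it defers to \cite{cf:Birk}, Lemma~1---and what you have written is precisely the standard spine computation that lies behind that reference, so your approach is the same as the one implicitly invoked.
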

\begin{proof}
See e.g.\ \cite{cf:Birk} Lemma 1.
\end{proof}

\subsection{A lower bound on path counting}

The most important step, and whose proof will be the focus of all the rest of the paper is to show that
as soon as $p\ge (2d)^{-1}$, with large probability, a lot of paths are open. The idea is to use the spine as a backbone to 
construct a lot of open paths with large probability. This result combined with the two Lemmata 2.1 and 2.2 easily yields Theorem \ref{mainres}.

\begin{proposition}\label{mainprop}
Given $\gep>0$,
when $d$ is large enough,  there exists $c(\gep,d)$ such that   for all $p\ge (2d)^{-1}$, for all $N$ large enough

\begin{equation}\label{acrimed2}
\pi_N\otimes \bbP \left[ \tilde Z_N(\go,S)\ge 2^{\frac{(3-\gep)}{(2d)^{2}}N}\right]<e^{-c(\gep,d)N}.
\end{equation}
\end{proposition}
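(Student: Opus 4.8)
We must show that, with $\pi_N\otimes\bbP$-probability at least $1-e^{-c(\gep,d)N}$, the environment $\tilde\go$ carries at least $2^{(3-\gep)N/(2d)^2}$ open self-avoiding paths of length $N$, i.e.\ $\tilde Z_N(S,\go)\ge 2^{(3-\gep)N/(2d)^2}$; equivalently, the bad event $\{\tilde Z_N<2^{(3-\gep)N/(2d)^2}\}$ has probability $<e^{-c(\gep,d)N}$. The mechanism is that opening the spine $S$ and then every other edge independently with probability $p$ produces a great many open paths. As a preliminary reduction: $\pi_N$ does not depend on $p$, and $\go\mapsto\tilde Z_N(S,\go)$ is non-decreasing (enlarging an environment cannot destroy open paths), so a monotone coupling of the fields $(\go_e)_e$ over $p$ shows that $\pi_N\otimes\bbP_p[\tilde Z_N<t]$ is non-increasing in $p$; hence it suffices to treat $p=(2d)^{-1}$.

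\emph{A deterministic path-multiplication mechanism.} Fix $S$ and argue conditionally on it. Along $S$ one singles out ``sockets'': pairwise disjoint windows $I_1,I_2,\dots$ of boundedly many consecutive vertices of $S$, mutually separated along $S$ by a fixed gap $K_0$, whose local geometry is of a prescribed favourable type. The construction is engineered so that to each socket $I_j$ there is attached a finite set $E_j$ of $O(1)$ edges lying near $I_j$ and disjoint from $S$, with two properties. (i) If every edge of $E_j$ is $\go$-open --- an event whose conditional probability given $S$ is of order $(2d)^{-2}$ --- then the portion of $S$ across $I_j$ can be re-routed, within a bounded neighbourhood $B_j$ of $I_j$, into another path with the same endpoints and the same number of steps, still self-avoiding, and in a way offering a genuine binary choice. (ii) With $K_0$ large enough the $E_j$ are disjoint and the $B_j$ are disjoint, so the ``activation'' events $\{E_j\text{ all open}\}$ are independent given $S$ and re-routings at distinct activated sockets can be carried out simultaneously and independently. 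Consequently, if $M=M(S,\go)$ denotes the number of activated sockets, then $\tilde Z_N(S,\go)\ge 2^{M}$; and the favourable socket type is chosen so that the density of admissible sockets along a typical spine, multiplied by the $\asymp(2d)^{-2}$ activation cost, gives $\bbE[M\mid S]\ge(3-\gep/2)\,N/(2d)^2$ for typical $S$ --- this is where the constant $3$ enters.

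\emph{Concentration.} Call $S$ \emph{good} if it carries enough admissible sockets to make $\bbE[M\mid S]\ge(3-\gep/2)N/(2d)^2$ and if it makes no near-return to the neighbourhoods of those sockets. A self-avoiding walk in high dimension locally resembles a simple random walk and spreads out, so $S$ fails to be good only on a set of $\pi_N$-probability $\le e^{-cN}$, by a union bound together with the standard submultiplicativity / large-deviation estimates for $|\mathcal S_N|$. Conditionally on a good $S$, $M$ is a sum of $\Theta(N)$ independent indicator variables with $\bbE[M\mid S]\ge(3-\gep/2)N/(2d)^2$, so a Chernoff bound yields $\bbP[\,M<(3-\gep)N/(2d)^2\mid S\,]\le e^{-c(\gep)N/(2d)^2}$. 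Hence, off an event of $\pi_N\otimes\bbP$-probability at most $e^{-cN}+e^{-c(\gep)N/(2d)^2}=e^{-c(\gep,d)N}$, one has $\tilde Z_N(S,\go)\ge 2^{M}\ge 2^{(3-\gep)N/(2d)^2}$; by the preliminary reduction this holds for every $p\ge(2d)^{-1}$, which is exactly the statement of the proposition.

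\emph{Where the difficulty lies.} The monotone reduction, the large-deviation bound defining good spines and the Chernoff step are routine; the entire weight of the argument is the combinatorial core of the second step, and three things make it delicate. First, the re-routings must preserve the length \emph{exactly equal to} $N$: the naive move of detouring around an open square adds two steps, so one is forced to work with genuinely length-neutral local moves (corner flips and combinations of them) or else to compensate a local expansion at one socket by a local contraction at another --- the ``modification of the length when adding bridges on the walk'' alluded to in the introduction. Second, splicing local re-routings into $S$ may destroy \emph{global} self-avoidance, since the spine can pass near a socket again at a much earlier or later time; this ``interaction between the walk and its past'' is precisely what forces the near-return clause into the definition of a good spine and the separation $K_0$ between sockets. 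Third --- and this is the real crux, and the source of the constant $3$ --- one must identify a favourable socket type whose enabling configuration costs only $O(1)$ open edges (probability $\asymp(2d)^{-2}$, not smaller) while the counting of admissible sockets along a typical spine delivers the multiplicative factor $3$; doing this carefully, against the corrections due to self-avoidance and to the spine's own geometry, is what occupies the rest of the paper.
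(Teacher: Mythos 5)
Your high-level architecture is the paper's: reduce to $p=(2d)^{-1}$ by monotonicity, restrict to a set of good spines of $\pi_N$-probability $1-e^{-cN}$, open local ``bridges'' over the spine in disjoint windows, and conclude by concentration. But the step you call the crux, and defer, is not merely delicate --- the specific structure you commit to cannot produce the constant $3$. You posit a single favourable socket type whose activation (probability $\asymp(2d)^{-2}$) yields an \emph{independent, length-neutral} binary re-routing, so that $\tilde Z_N\ge 2^M$ with $\bbE[M\mid S]\ge(3-\gep/2)N/(2d)^2$. No such socket type exists. A length-neutral local re-routing replacing $j$ spine edges by $j$ off-spine edges costs $(2d)^{-j}$ per choice; at cost $(2d)^{-2}$ the only option is the corner flip ($j=2$, one alternative per bend), whose density along the spine is $N/(2d)^2(1+O(d^{-1}))$; the $j\ge 3$ length-neutral moves contribute only $O(Nd^{-3})$. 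So independent binary choices give $2^{(1+o(1))N/(2d)^2}$, i.e.\ the constant $1$, not $3$.

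The missing two units of the exponent come, in the paper, from the two kinds of unit squares that share three edges (a one-edge chord across a U-turn, which \emph{shortens} the path by $2$; density of opportunities $\asymp N/(2d)$, cost $(2d)^{-1}$) and one edge (a three-edge detour around a spine edge; $\asymp 2d$ directions at cost $(2d)^{-3}$), each with density $\asymp N/(2d)^2$ of open instances. These individually change the length, so they can only be used in matched pairs, and the count of admissible subsets is $2^{n_a}\sum_k\binom{n_b}{k}\binom{n_c}{k}\ge 2^{n_a}4^{n_b\wedge n_c}/C(n_b\wedge n_c)$ --- which recovers the full entropy $2^{n_b+n_c}$ but is \emph{not} of the form $2^M$ for a sum of independent indicators. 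Your fallback of ``compensating a local expansion at one socket by a local contraction at another'' does not repair this: pre-pairing a contraction site with an expansion site makes the pair activate with probability $\asymp(2d)^{-3}$, yielding only $O(Nd^{-4})$ usable pairs. So the heart of the proof is exactly the three-type bridge construction with the global $(b)$--$(c)$ matching (the paper's Lemmata 4.1--4.3 and the injection behind its inequality \eqref{mirex}), and it is absent from your argument. Separately, the good-spine estimate you call routine is the paper's entire Section 5: controlling $|U_N|$ and especially $|V^2_N|$ under $\pi_N$ with exponentially small failure probability requires comparison with the non-backtracking and loop-$4$-free Markov chains and a Cauchy--Schwarz change of measure, because the events in question have probability $e^{-\Theta(N/d)}$ and must beat the $(1-(2d)^{-2}+O(d^{-3}))^{-N}$ cost of conditioning on self-avoidance.
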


\begin{proof}[Proof of Theorem \ref{mainres} from Proposition \ref{mainprop}]
According to Lemma \ref{spine} and Lemma \ref{sizebi}, the Theorem reduces to prove that there exists 
$c$ such that 
\begin{equation}\label{crocodiles}
\pi_N\otimes \bbP \left[ \tilde Z_N(\go,S)\ge e^{cN}p^N|\mathcal S_N| \right]<e^{-cN}.
\end{equation}
 From the Proposition \ref{mainprop},  and the fact that (cf.\ the definition of the connective constant)
 $$|\mathcal S_N|=(\mu_d+o(1))^N,$$
Equation \eqref{crocodiles} is satisfied for $N$ large enough if 
 
 \begin{equation}\label{cocorico}
p<(\mu_d)^{-1} 2^{\frac{(3-\gep)}{(2d)^{2}}}.
 \end{equation}
We use the following asymptotic expansion in $d$ of $\mu_d$ from \cite{cf:Kes}
  \begin{equation}\label{cracoucas}
 \mu_d:=2d-1-\frac{1}{2d}+O(d^{-2}),
 \end{equation}
 to get that, for $d$ large enough, the inequality \eqref{cocorico} is satisfied if
 \begin{equation}
 p\le \frac{1}{2d}+\frac{1}{(2d)^2}+(2+3\log 2-\gep)\frac{1}{(2d)^3}.
 \end{equation} 
 \end{proof}

\section{Strategy of proof for Proposition \ref{mainprop}}

\subsection{Bridges over the spine}

Our strategy is to have a lower bound is to look only at paths that uses either edges of the spine or make very short excursions out of it
(we will call these excursions \textsl{bridges}).
We first  have to restrict ourselves to a set of good spines $S$ and show that most spines are good.
Then we show that when a spine is good, with large probability, we have a lot of open bridges, which allows to have a lot of open paths.

\begin{figure}[hlt]
 \begin{center}
 \leavevmode 
 \epsfxsize =14 cm
 \psfragscanon
 \psfrag{a}{$(a)$}
 \psfrag{b}{$(b)$}
\psfrag{c}{$(c)$}
 \epsfbox{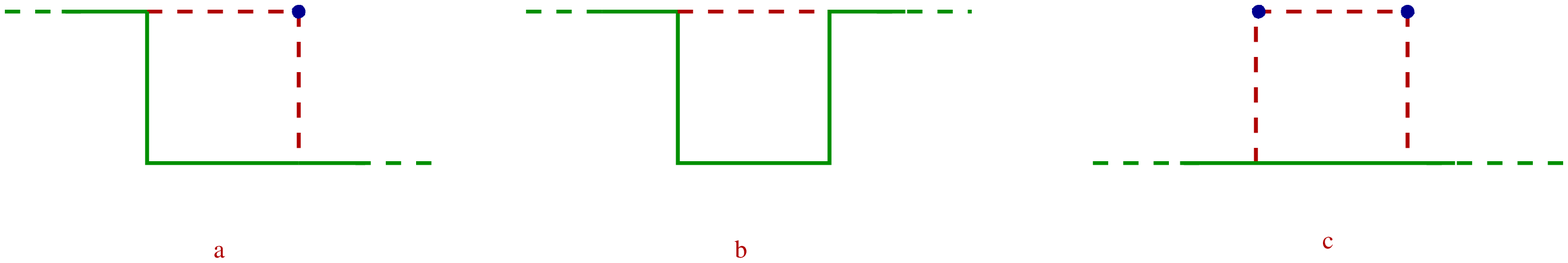}
 \end{center}
 \caption{\label{threebridges} Three possible ways of constructing a bridge (dashed edges) over the spine. The thick dots indicate the free sites.}
 \end{figure}

Let us introduce some notation. We set $X_n:=S_{n}-S_{n-1}$ to be the $n$-th increment of $S$,
$U_N$ to be the set of indices when the spine makes a U-turn,
\begin{equation}\label{un}
 U_N:=\{n\in[3,N]\ | \ S_{n}\sim S_{n-3}\},
\end{equation}
and $T_N$ the set of indices around which $S$ does not bend
\begin{equation}\label{tn}
T_N:=\{n\in[1,N-1]\ | \ X_n= X_{n-1}\}.
\end{equation}
Let $I=\{x \in \bbZ^d \ | \ |x|=1 \ \}$ to be the set of nearest neighbors of the origin.

We have to consider three types of bridges (figure \ref{threebridges}):
\begin{itemize}
 \item [(a)] The edges $(S_{n-1},S_{n-1}+X_{n+1})$ and  $(S_{n-1}+X_{n+1},S_{n+1})$ are $\go$ open for some 
 $n\notin T_n$. We call this an $(a)$-bridge over $S_n$.
 \item [(b)] The edge $(S_{n},S_{n-3})$ is open for some $n\in U_n$, we call this a $(b)$-bridge linking $S_n$ to $S_{n-3}$. 
 \item [(c)] The edges $(S_{n-1},S_{n-1}+e)$, $(S_{n-1}+e,S_{n}+e)$ and $(S_{n}+e,S_n)$ are open for some $e\in I \setminus \{ X_n, -X_{n-1}, X_{n+1}\}$,
 we call this a $(c)$-bridge over the edge $(S_{n-1},S_n)$ (the restriction of the direction $e$ is there because we do not want our bridge to use edges of the spine).
 \end{itemize}

To a bridge we can associate a square of open edges in $\tilde \go$ which is composed of $2$, $3$, or $1$ edges of the spine and the rest forms the bridge (see Figure \ref{threebridges}).
We call the vertices of that are at the end of two edges of the bridge the free sites (there is one free site for an $(a)$-bridge and two for 
a $(c)$-bridge and none for a $(b)$-bridge).

\begin{figure}[hlt]
 \begin{center}
 \leavevmode 
 \epsfxsize =14 cm
 \psfragscanon
 \psfrag{a}{$(a)$}
 \psfrag{b}{$(b)$}
\psfrag{c}{$(c)$}
 \epsfbox{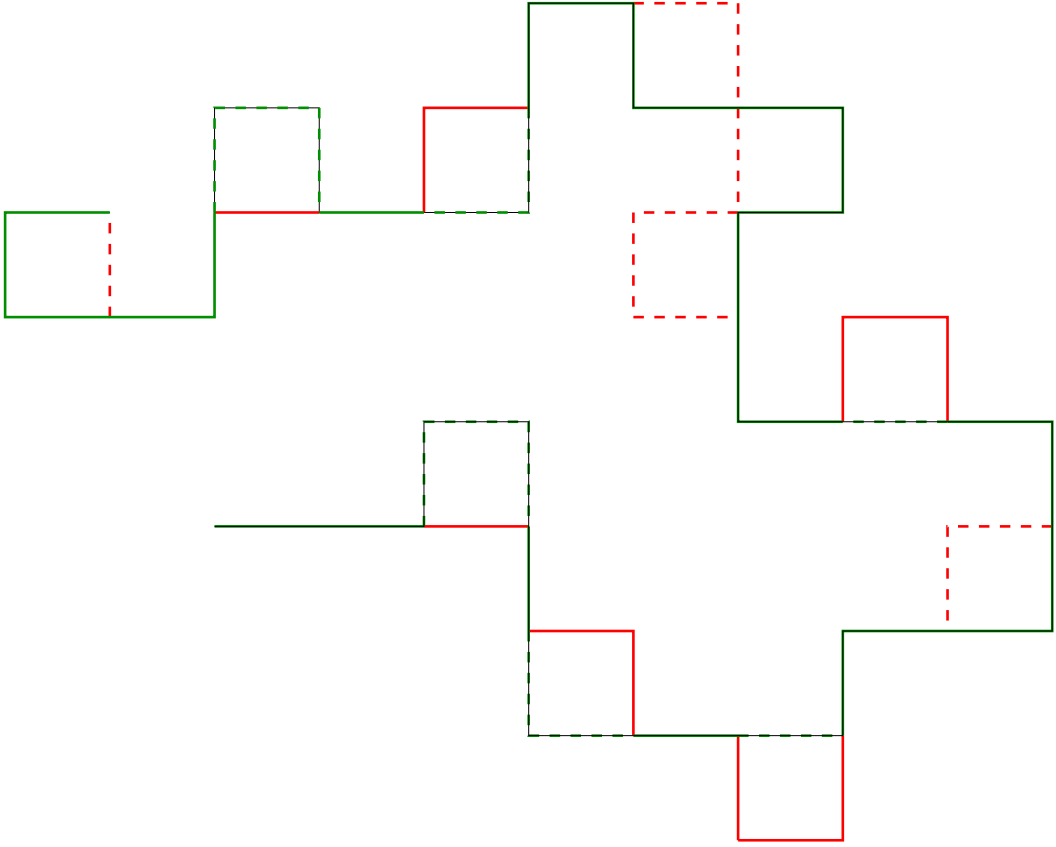}
 \end{center}
 \caption{\label{superpath} The green line represents the spine $S$, and the red one the open bridges. We can construct a path of length $N$ (continuous line) by using some of the bridges and in particular the same number of bridges of type $(b)$ and $(c)$. The unused bridges and unused parts of the spine are left as dashed lines. This construction is possible as soon as the squares formed by the bridges do not overlap i.e.\ do not share edges or free sites in common (no that we allow different squares to share some spine sites).}
\end{figure}

It can be remarked that if we have $n_a$ bridges of type $(a)$, $n_b$ bridges of type $(b)$ and $n_c$ bridges of type $(c)$ 
such that the corresponding squares do not share edges nor free sites like on figure \ref{superpath} where $n_b=4$ and $n_a=n_c=3$ (see on figure \ref{overlapbri} 
examples of configurations where bridges do overlap), then it is possible to construct

\begin{equation}\label{lowb}
N_{n_a,n_b,n_c}:=2^{n_a} \sum_{k=0}^{n_b\wedge n_c} \binom   {n_b} k \binom  {n_c} k \ge 2^{n_a} \frac { 4^{n_b\wedge n_c}}{C(n_b\wedge n_c)},
\end{equation}
distinct self-avoiding paths of length $N$ that are open for $\tilde \go$. 
This is because if one selects a subset of the $(a)$-bridges and subsets of the $(b)$ and $(c)$-bridges of the same cardinality one can form an open path
for $\tilde \go$ that uses the edges of the selected bridges and the edges of the spine elsewhere else like on Figure \ref{superpath} (we have to use the same number of $(b)$ and 
$(c)$ because these type of bridges modify the total length of the path
and we want the length to be equal to $N$). Obviously different choices for the set of bridges give a different path.

\begin{figure}[hlt]
 \begin{center}
 \leavevmode 
 \epsfxsize =14 cm
 \psfragscanon
 \psfrag{a}{$(1)$}
 \psfrag{b}{$(2)$}
\psfrag{c}{$(3)$}
 \psfrag{d}{$(4)$}
 \psfrag{e}{$(5)$}
\psfrag{f}{$(6)$}
 \epsfbox{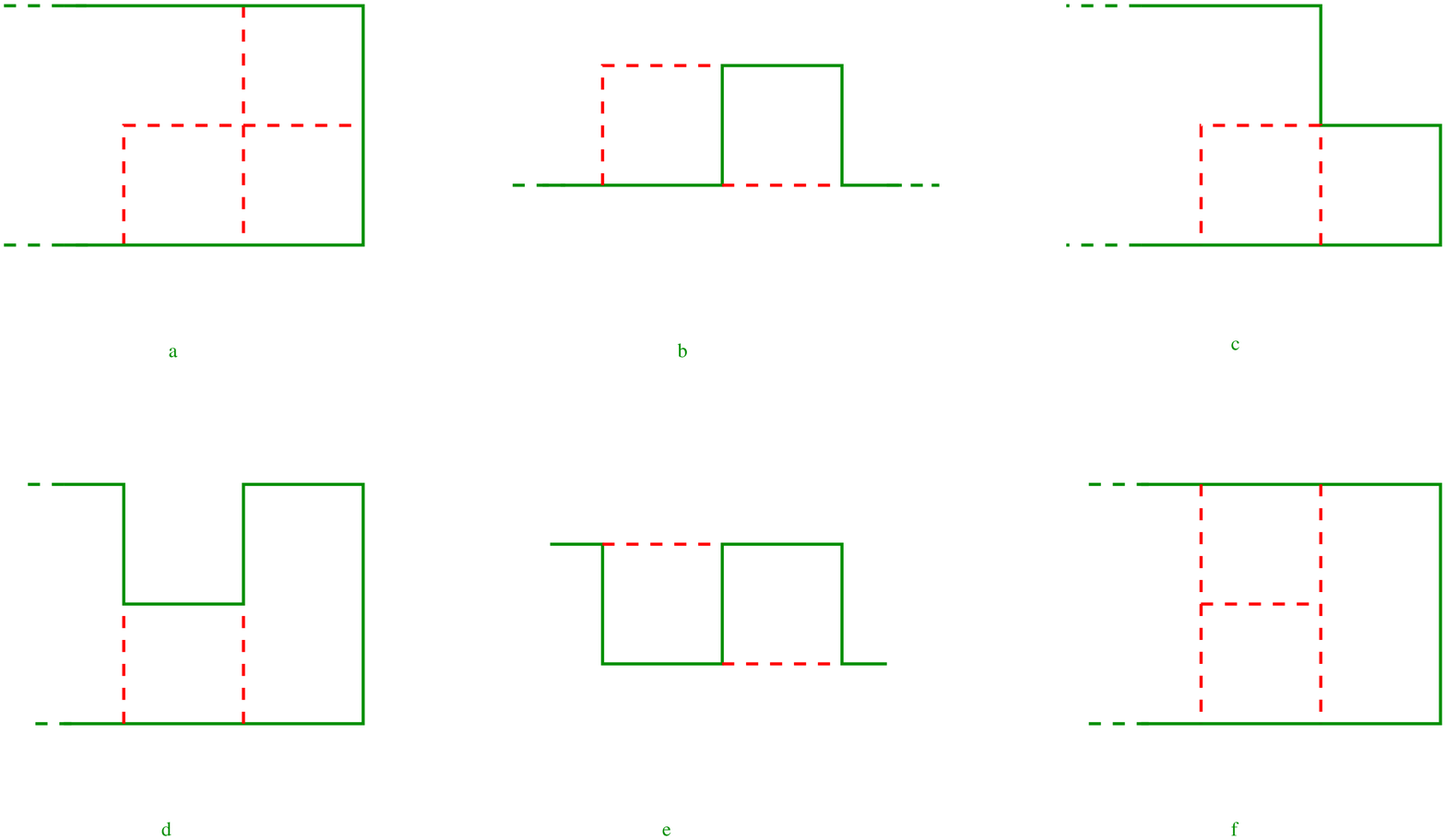}
 \end{center}
 \caption{\label{overlapbri} 
 Here we represent some kind of situations were bridges cannot be used because they overlap with the spine or with another bridge.
 We will have to be careful in our construction to avoid these kind of situation.}
 \end{figure}

Thus our work will be focused on proving that with an overwhelmingly large probability we can construct a lot of bridges 
with squares that overlap only on spine sites.

\medskip

The typical value of the number of bridges of type $(a)$, $(b)$ and $(c)$ at the first order is not difficult to derive at the heuristic level if we accept that 
in high dimension the spine behaves like a simple random walk: given $n \notin T_N$ (and in large dimension the proportion of indices not in $T_N$ tend to one),
the probability of a bridge of type $(a)$ being open is $(2d)^{-2}$ so that
the typical number of bridges of type $(a)$ is $[(2d)^{-2}+o(d^{-2})]N$. For bridges type $(b)$
we notice that the probability of making a $U$-turn for the self-avoiding walk should be roughly the same as for the simple random walk
where it is asymptotically equivalent to $1/2d$ when $d$ is large. Hence we typically have $|U_N|=[(2d)^{-1}+o(d^{-1})]N$ and each for each $n$ in $U_N$,
we have a probability $(2d)^{-1}$ to have an open bridge, giving a heuristic number of $(b)$-bridges of $[(2d)^{-2}+o(d^{-2})]N$.
For $(c)$, we notice that for each edge, there are about $2d$ directions in which a bridge of type $c$ can be open. Considering that each bridge is composed of three edges
and has a probability 
$(2d)^{-3}$ of being open, we get a similar asymptotic for the number of $(c)$-bridges.
Hence from \eqref{lowb} one should have typically

\begin{equation}\label{acrimed}
Z_N(\go,S)\ge 2^{(3(2d)^{-2}+o(d^{-2}))N}.
\end{equation}

However, to make this rigorous, we have to show that on the complement of a set of exponentially small probability, we 
can construct large sets of bridges that do not overlap with each other. Thus we would have that \eqref{acrimed} holds with large probability.

\subsection{Good spines and good $\go$}

The remainder of the paper is devoted to that purpose. We can separate the proof of Proposition \ref{mainprop} in two steps.
The first step is to reduce to a set of good spines for our construction, on which
were there are lots of spaces to construct bridges that do not overlap. We have to show that the set of good spine has a large probability.

\medskip

The second step is to show that, given a good spine, the probability of being able to construct a lot of non-overlapping bridges is large.
Introducing these two results requires some notation.

\medskip

Set $V^1_N$ resp.\ $V^2_N$ to be the set of time for which the spine never comes back at a distance one (resp.\ two) after more than one (resp.\ two)step, in both directions.

\begin{equation}\begin{split}
 V^1_N&:=\{n\in[0,N] \ | \forall m\le N, |m-n|>1 \Rightarrow |S_m -S_n|>1\},\\
 V^2_N&:=\{n\in[0,N] \ | \forall m\le N, |m-n|>2 \Rightarrow |S_m -S_n|>2\},\}. 
\end{split}\end{equation}
Note that when $N$ is larger than $6$ we have $V^{2}_N\subset V^{1}_N$.
Given $\gep>0$, we define $\mathcal A_{\gep,N}=\mathcal A$ the set of good spines
\begin{equation}
 \mathcal A_{\gep,N}:= \{ |V^2_N|\ge(1-\gep)N\}\cap  \{|T_N|\le \gep N\}
 \cap\left\{|U_N|\in \left[\frac{(1-\gep)N}{2d},\frac{(1+\gep)N}{2d}\right]\right\}.
\end{equation}

Then, we divide Proposition \ref{mainprop} into two results.

\begin{proposition}\label{gudespine}
For fixed $\gep$, for $d$ large enough, there exists $c(\gep,d)>0$ such that for all $N$ large enough
\begin{equation}
 \pi_N(\mathcal A)\ge 1- e^{-c(d,\gep)N}.
\end{equation}
\end{proposition}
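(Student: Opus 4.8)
The plan is to bound $\mathcal A^c$, which is contained in $B_1\cup B_2\cup B_3$ with
\[
B_1=\{|V^2_N|<(1-\gep)N\},\qquad B_2=\{|T_N|>\gep N\},\qquad B_3=\{\,|U_N|\notin[(1-\gep)\tfrac N{2d},(1+\gep)\tfrac N{2d}]\,\},
\]
and to show each has $\pi_N$-probability $\le e^{-cN}$ for $d$ large. I would use two soft inputs throughout. First, sub-additivity of $\log|\mathcal S_N|$ gives $|\mathcal S_N|\ge\mu_d^N$, so with $\mu_d=2d-1-(2d)^{-1}+O(d^{-2})$ one has $(2d-1)^N\le\mu_d^N\,e^{O(N/d^2)}$. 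Second, every self-avoiding path is in particular non-reversing, so if $\bbP^{\mathrm{nr}}$ denotes the uniform law on the $2d(2d-1)^{N-1}$ non-reversing $N$-step paths then $\pi_N(A)\le\frac{2d(2d-1)^{N-1}}{|\mathcal S_N|}\bbP^{\mathrm{nr}}(A)\le 2\,e^{O(N/d^2)}\,\bbP^{\mathrm{nr}}(A)$ for every $A$. Under $\bbP^{\mathrm{nr}}$ the increments $(X_n)$ form a finite, irreducible, aperiodic Markov chain with uniform stationary law, so any bounded additive functional of a bounded window of the chain satisfies a large-deviation principle with rate function vanishing only at the mean; since every rate produced below is of order $1/d$ or larger, the factor $e^{O(N/d^2)}$ is harmless.

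The event $B_2$ is pure counting: building a self-avoiding path from left to right, a step forced to be straight leaves one choice and any other step at most $2d-1$, so the number of $N$-step self-avoiding paths with a prescribed straight-set $K$ is $\le 2d(2d-1)^{N-1-|K|}$; summing over $|K|\ge\gep N$ and dividing by $|\mathcal S_N|\ge\mu_d^N$ leaves a factor $\le\big(2(2d-1)^{-\gep}\big)^N e^{O(N/d^2)}$, which is $e^{-cN}$ once $(2d-1)^\gep>2$, i.e.\ for $d$ large depending on $\gep$. For $B_3$, by the domination it suffices to estimate $\bbP^{\mathrm{nr}}$; here $|U_N|=\sum_{n=3}^N\ind\{|X_{n-2}+X_{n-1}+X_n|=1\}$ is an additive functional of a $3$-window of the increment chain, and a direct check shows the only admissible triples of consecutive non-reversing steps with $|X_{n-2}+X_{n-1}+X_n|=1$ are of ``out\,/\,sidestep\,/\,back'' type, so the stationary mean per step is $\frac{2d-2}{(2d-1)^2}=\frac1{2d}(1+O(d^{-2}))$. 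As $\gep$ is fixed, for $d$ large the target window strictly contains a fixed multiplicative neighbourhood of this mean, so the Markov-chain large-deviation bound gives $\bbP^{\mathrm{nr}}(B_3)\le Ce^{-c(\gep,d)N}$ with $c(\gep,d)$ of order $1/d$ times a positive function of $\gep$, whence $\pi_N(B_3)\le e^{-cN}$.

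The delicate event is $B_1$, because $|V^2_N|$ is \emph{not} a local functional of the increment chain. A time $n\notin V^2_N$ produces a pair $(n,m)$ with $|m-n|>2$ and $|S_m-S_n|\le2$, so $|(V^2_N)^c|\le Q:=\#\{(n,m): n\ne m,\ |m-n|>2,\ |S_m-S_n|\le2\}$, and I would split $Q=Q_{\le L}+Q_{>L}$ according to whether $|m-n|\le L$, for a fixed $L$. The short-range part $Q_{\le L}$ is, for fixed $L$, an additive functional of an $L$-window of the chain whose stationary mean per step is at most $\gd(d):=\sum_{k\ge3}\bbP^{\mathrm{nr}}(|S_k|\le2)$, which is $O(1/d)\to0$ (a walk in high dimension has few short near-returns — the $k=3$ term, corresponding to U-turns, already dominates); hence the Markov-chain bound gives $\bbP^{\mathrm{nr}}(Q_{\le L}\ge\frac{\gep N}{2})\le Ce^{-cN}$ with rate blowing up as $d\to\infty$. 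For the long-range part one shows that, uniformly in $n$, the number $R_n$ of later times $m>n+L$ with $|S_m-S_n|\le2$ satisfies $\bbP^{\mathrm{nr}}(R_n\ge1)\to0$ as $d\to\infty$ with fast-decaying tails — after $L$ steps the walk has moved $\ell^1$-distance $\gg2$ away from $S_n$, and a transient walk in high dimension rarely returns to a bounded set — and then a decoupling over the successive long near-returns upgrades the first-moment bound $\E^{\mathrm{nr}}[Q_{>L}]=o(N)$ into $\bbP^{\mathrm{nr}}(Q_{>L}\ge\frac{\gep N}{2})\le Ce^{-cN}$. Together with the domination this gives $\pi_N(B_1)\le e^{-cN}$, and a union bound over $B_1,B_2,B_3$ finishes.

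I expect the main obstacle to be exactly the long-range part of $B_1$: unlike $|T_N|$ and $|U_N|$ the functional $|V^2_N|$ is not local, so the soft Markov-chain machinery does not apply directly and one genuinely needs high-dimensional transience plus a decoupling argument to convert a bound on the mean into exponential concentration. A cleaner alternative, handling all three functionals uniformly, is to replace the non-reversing-walk domination by the decomposition of the self-avoiding walk into irreducible bridges: distinct bridges occupy disjoint slabs in the first coordinate, so a close return can occur only between a bounded number of consecutive bridges, which makes $|V^2_N|$ (as well as $|T_N|$ and $|U_N|$) a functional that is local in the bridge sequence, whereupon the large-deviation engine behind Kesten's pattern theorem applies just as above.
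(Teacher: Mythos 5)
Your decomposition of $\mathcal A^c$ into the three events and your handling of $|T_N|$ and $|U_N|$ are sound and close in spirit to the paper's (the paper runs Cram\'er under the non-reversing law $\pi^1$ and divides by $\pi^1(\mathrm{SAW})$, but your direct count for $|T_N|$ and your Markov-chain LDP for $|U_N|$ land in the same place with the same orders of magnitude). The genuine gap is in $B_1$, i.e.\ the event $\{|V^2_N|<(1-\gep)N\}$, and you have identified it yourself. The short-range piece $Q_{\le L}$ is indeed an additive functional of an $L$-window of the increment chain, so a Markov-chain LDP applies and the rate at $\gep/2$ is $\gg 1/d^2$, which survives the $e^{O(N/d^2)}$ cost of the domination by $\bbP^{\mathrm{nr}}$. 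But for $Q_{>L}$ you only produce a first-moment bound $\bbE^{\mathrm{nr}}[Q_{>L}]=o(N)$ and then invoke an unexplained ``decoupling''. That is exactly the hard step: $Q_{>L}$ is \emph{not} a local functional, the events $\{|S_m-S_n|\le 2\}$ for distinct long-range pairs are strongly positively correlated (a walk that revisits one neighbourhood typically also revisits nearby ones, producing many pairs at once), and a first-moment bound does not yield exponential concentration without a substantive additional structure (negative association, a regeneration scheme, or a bounded-difference/martingale argument with controlled increments). Nothing in the proposal supplies that structure, so as written the argument for $B_1$ does not close.

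The paper avoids this problem by a different route rather than by repairing the concentration step. It works under $\pi^2$ (uniform over paths with no backtrack and no $4$-loop), introduces a sequence of stopping times $\tau^2_n$ at which the walk comes within distance two of its non-immediate past, and observes that at each such stopping time the conditional probability of remaining self-avoiding over the next two steps is at most $1-(2d-1)^{-2}$: there is a two-step move of probability $\ge(2d-1)^{-2}$ that closes a loop. Iterating gives $\pi^2\bigl(\{\tau^2_{\gep N}\le N\}\cap\mathrm{SAW}\bigr)\le\bigl(1-(2d-1)^{-2}\bigr)^{\gep N}$, a decay rate of order $\gep/(2d)^2$ per step; since $\pi^2(\mathrm{SAW})=(1+O(d^{-3}))^N$, dividing wins by a factor of $d$, and a Cauchy--Schwarz step controlling $\partial\pi^2_N/\partial\pi^2$ (via a bound on $\pi^2(x^{|U_N|})$) transfers the estimate to $\pi^2_N$ and then to $\pi_N$. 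Two things are worth noting. First, the penalty argument makes no distinction between short- and long-range returns, so there is nothing to decouple. Second, this is precisely why the paper switches from $\pi^1$ to $\pi^2$ for $|V^2_N|$: under $\pi^1$ the SAW-probability decays at rate $\approx (2d)^{-2}$, the same order as the per-return penalty, so one would need to compare two rates of the same order to within a constant; under $\pi^2$ the SAW cost drops to $O(d^{-3})$ and the comparison becomes soft. Your non-reversing domination, paying $e^{O(N/d^2)}$, faces the same razor-thin comparison, which is another reason the long-range concentration would have to be done carefully even if the decoupling were made precise. Your alternative suggestion via the Hammersley--Welsh bridge decomposition is plausible and would localise $|V^2_N|$ in the bridge sequence, but it is also not what the paper does, and it is not developed in the proposal.
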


\begin{proposition}\label{bridgeconstructor}
For fixed $\gep$, for $d$ large enough, there exists $c(\gep,d)>0$ such that for $p\ge (1/2d)$,
for $N$ is large enough, and $S\in \mathcal A$ then
\begin{equation}
 \bbE_p\left[ \tilde Z_N(\go,S)\le 2^{(3-6\gep)(2d)^{-2}N}  \right] \le e^{-c(\gep,d)N}. 
\end{equation}
\end{proposition}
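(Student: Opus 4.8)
\textbf{Proof proposal for Proposition \ref{bridgeconstructor}.}

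The plan is to work conditionally on a fixed good spine $S \in \mathcal A$, and to show that the number of open non-overlapping bridges of types $(a)$, $(b)$, $(c)$ concentrates around its mean (which is of order $N/(2d)^2$ in each class), so that the deterministic lower bound \eqref{lowb} forces $\tilde Z_N(\go,S) \ge 2^{(3-6\gep)(2d)^{-2}N}$ with overwhelming probability. First I would set up, for each $n$, a candidate bridge: an $(a)$-candidate at each $n \notin T_N$, a $(b)$-candidate at each $n \in U_N$, and a $(c)$-candidate at each edge $(S_{n-1},S_n)$ (here one first picks a direction $e$; I would reveal only one fixed choice of $e$ per index, or sum over $e$ and use that the directions are disjoint, to keep independence clean). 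Each candidate is ``present'' if the relevant (one, two or three) edges outside the spine are $\go$-open, an event of probability $p^2$, $p$ (one edge, since two of the three edges of the $(b)$-square are spine edges — wait: a $(b)$-bridge is a single edge $(S_n,S_{n-3})$, probability $p \ge 1/2d$) and $p^3$ respectively; since $p \ge 1/2d$ these are each $\ge (2d)^{-2}$ after accounting for the fact that for type $(c)$ there are $\ge (2d - O(1))$ admissible directions $e$, each contributing $p^3 \ge (2d)^{-3}$, for a total $\ge (2d)^{-2}(1-o(1))$ per edge.

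The key point is independence and non-overlap. Here the hypothesis $S \in \mathcal A$ does the work: the condition $|V^2_N| \ge (1-\gep)N$ means that for all but $\gep N$ indices the spine stays at $\ell^1$-distance $>2$ from itself outside a window of size $2$, and the squares attached to bridges based at such ``free'' indices live in disjoint edge-sets of $\bbZ^d \setminus S$ (they share at most spine sites, which is allowed — exactly the situation of Figure \ref{superpath}, and explicitly \emph{not} the bad overlaps of Figure \ref{overlapbri}). Restricting attention to bridges based in $V^2_N$, the present/absent indicators are functions of disjoint blocks of i.i.d.\ Bernoulli$(p)$ variables, hence genuinely independent. I would then run three applications of a standard large-deviation bound for sums of independent indicators (Chernoff/Hoeffding): the number $n_a$ of present $(a)$-candidates based in $V^2_N$ is a sum of $|V^2_N \setminus T_N| \ge (1-2\gep)N$ independent indicators of probability $\ge (2d)^{-2}(1-o(1))$, so $\bbP(n_a \le (1-\gep)(2d)^{-2}N) \le e^{-c(\gep,d)N}$; likewise $n_b$ is a sum of $|U_N| \ge (1-\gep)N/(2d)$ independent indicators of probability $\ge 1/(2d)$, giving $n_b \ge (1-\gep)(2d)^{-2}N$ off an exponentially small event; and the same for $n_c$. (One must be mildly careful that $U_N$-indices and $T_N$-indices and the free window of size $2$ do not force different bridges' squares to collide, but $V^2_N$-membership plus the $|T_N| \le \gep N$ bound handle this: discard the $\le \gep N$ indices in $T_N$ and the $\le O(\gep N)$ indices within distance $2$ of a non-$V^2$ index, still leaving $(1-O(\gep))N$ clean indices in each class.)

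On the complement of these three exponentially-small events we have $n_a, n_b, n_c \ge (1-C\gep)(2d)^{-2}N$ simultaneously, and plugging into \eqref{lowb},
\begin{equation}
\tilde Z_N(\go,S) \ge N_{n_a,n_b,n_c} \ge 2^{n_a}\,\frac{4^{n_b \wedge n_c}}{C(n_b\wedge n_c)} \ge \frac{2^{n_a + 2(n_b\wedge n_c)}}{C\,N} \ge \frac{2^{3(1-C\gep)(2d)^{-2}N}}{CN} \ge 2^{(3-6\gep)(2d)^{-2}N}
\end{equation}
for $N$ large (absorbing the $CN$ and adjusting the constant in front of $\gep$; since $\gep$ in the statement is arbitrary this is harmless — one proves it with $3 - C\gep$ and renames). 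This gives $\bbE_p[\tilde Z_N(\go,S) \le 2^{(3-6\gep)(2d)^{-2}N}] \le 3 e^{-c(\gep,d)N} \le e^{-c'(\gep,d)N}$, as claimed.

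\textbf{Main obstacle.} The genuinely delicate point is \emph{not} the large-deviation estimates — those are routine — but verifying that the squares attached to the selected bridges really are edge-and-free-site disjoint, i.e.\ that the combinatorial construction behind \eqref{lowb} actually applies. One has to rule out all the bad configurations sketched in Figure \ref{overlapbri}: a bridge edge accidentally coinciding with a far-away piece of the spine, two $(a)$- or $(c)$-squares sharing a free site, a $(c)$-direction $e$ that is admissible locally but makes the square hit the spine two steps later, and the interaction between $U_N$-turns and nearby bridges. This is exactly where the precise definition of $V^2_N$ (distance $>2$ rather than $>1$, to accommodate the radius-$2$ footprint of $(c)$-bridges) and the good-spine event $\mathcal A$ are engineered to help, but turning ``$S \in \mathcal A$'' into a clean statement ``the candidate squares based at the $(1-O(\gep))N$ surviving indices are pairwise non-overlapping and independent'' requires a careful case analysis of how two squares can meet, and is where the bulk of the remaining work in the paper lies.
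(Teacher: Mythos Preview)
Your outline is essentially the paper's own proof: fix $S\in\mathcal A$, build three families $A_N,B_N,C_N$ of non-overlapping open bridges, show each has size $\ge (1-O(\gep))N/(2d)^2$ off an exponentially small event, and feed this into \eqref{lowb}. You have also correctly located the real work in the non-overlap verification.

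One technical point deserves correction. You assert that, after restricting to indices in $V^2_N$, the bridge indicators are functions of \emph{disjoint} edge-blocks and hence independent, so Chernoff/Hoeffding applies. For type-$(a)$ indicators this is true (and the paper checks it), but for types $(b)$ and $(c)$ it is not: two $(c)$-bridges over consecutive spine edges $(S_{n-1},S_n)$ and $(S_n,S_{n+1})$ share the edge $(S_n,S_n+e)$ whenever they use the same direction $e$, so the indicators $\xi_{n-1},\xi_n$ are genuinely correlated; your proposed fix of ``revealing one fixed $e$ per index'' drops the per-index probability to $(2d)^{-3}$, which is too small. More importantly, even when the raw indicators are independent, you cannot \emph{use} two adjacent open $(a)$-bridges simultaneously (the path through one lands at $S_{n+1}$, the other departs from $S_n$), so a thinning step of the form ``$\chi_n=1$ and $\chi_{n-1}=0$'' is forced, and after that thinning the counts are no longer sums of independent indicators. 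The paper's remedy is to observe that $|A_N|,|B_N|,|C_N|$ are each $1$-Lipschitz (or $K_d$-Lipschitz) functions of the relevant i.i.d.\ edge variables and to invoke McDiarmid's bounded-differences inequality instead of Chernoff; the expectation computations you sketched then suffice. This is a small fix, but without it your concentration step does not go through as written.
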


The remainder of the paper is divided in two Section, each of which is devoted to the proof of one of the above propositions.

\section{Proof of Proposition \ref{bridgeconstructor}}
By monotonicity it is sufficient to prove the result for $p=(2d)^{-1}$. Hence in this whole section $\bbP=\bbP_{(2d)^{-1}}$.

\medskip

The strategy is exactly the one exposed above: we want to show that with a large probability there are a lot of bridges of each type and that moreover they are located at the right places.
We start with the bridges of type $(b)$.
We define $B_N$ as a subset of the $n$ such that there is an open bridge of type $(b)$ linking $S_n$ and $S_{n-3}$, more precisely

\begin{equation}
B_N:= \{n\in U_N \ | \ (S_{n},S_{n-3}) \text{ is open for $\go$ }, (S_{n-2},S_{n-5})\ \text{is closed for $\go$}\},
\end{equation}
where the second condition has to be checked only if $n-2\in U_N$ (otherwise $(S_{n-2},S_{n-5})$ is not an edge in $\bbZ^d$).
This condition ''$(S_{n-2},S_{n-5})$ is closed for $\go$" is present so that all the bridges in our set are well separated to avoid  a situation such as $(5)$ on Figure \ref{overlapbri} where the two consecutive open bridges cannot be used simultaneously.
We show that when the spine is good, $|B_N|$ cannot be much smaller than $N/(2d)^2$ or more precisely

\begin{lemma}\label{badb}
For any fixed $\gep$, for large enough $d$, there exists $c(d,\gep)$
such that for all $N$ large enough, for all $S\in \mathcal A$

\begin{equation}
\bbP [|B_N|\le (1-2\gep)N/(2d)^2]\le e^{-c(d,\gep)N}.
\end{equation}
\end{lemma}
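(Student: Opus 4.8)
The plan is to show that $|B_N|$ is a sum of weakly dependent Bernoulli-type contributions, one per element of $U_N$, each with success probability essentially $(2d)^{-1}$, and then to conclude via a concentration/large-deviation argument. First I would fix a good spine $S\in\mathcal A$, so that in particular $|U_N|\ge (1-\gep)N/(2d)$. For each $n\in U_N$, consider the event $E_n$ that the edge $(S_n,S_{n-3})$ is open; since these edges are \emph{distinct} for distinct $n\in U_N$ (this uses that $S$ is self-avoiding: if $n,n'\in U_N$ then $\{S_n,S_{n-3}\}=\{S_{n'},S_{n'-3}\}$ would force $n=n'$ or $n=n'\pm 3$ with coinciding endpoints, which a self-avoiding path cannot produce — a point worth checking carefully), the $\mathbf 1_{E_n}$ are i.i.d.\ Bernoulli$\big((2d)^{-1}\big)$ under $\bbP=\bbP_{(2d)^{-1}}$. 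The extra requirement that $(S_{n-2},S_{n-5})$ be closed when $n-2\in U_N$ only removes a $n\mapsto$ neighbour from being counted, so it affects at most an adjacent pair; thus $|B_N|\ge \sum_{n\in U_N} \mathbf 1_{E_n} - (\text{number of }n\in U_N\text{ with }n-2\in U_N\text{ and both edges open})$.

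The second step is to control that correction term. The pairs $(n-2,n)$ with both in $U_N$ give, for each such pair, a probability $(2d)^{-2}$ that both edges are open, so the expected number of ``bad adjacent pairs'' is at most $|U_N|(2d)^{-2}\le (1+\gep)N/(2d)^3$, which is negligible compared with the main term $N/(2d)^2$ once $d$ is large. I would bound this correction above by a large-deviation estimate: it is dominated by a sum of i.i.d.\ (or $2$-dependent, hence split into two i.i.d.\ blocks) Bernoulli$\big((2d)^{-2}\big)$ variables, so the probability it exceeds, say, $\gep N/(2d)^2$ is at most $e^{-c(d,\gep)N}$ by a standard Chernoff bound. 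Meanwhile the main term $\sum_{n\in U_N}\mathbf 1_{E_n}$ is a Binomial$\big(|U_N|,(2d)^{-1}\big)$ with mean $\ge(1-\gep)N/(2d)^2$, and a Chernoff bound gives
\begin{equation}
\bbP\Big[\sum_{n\in U_N}\mathbf 1_{E_n}\le (1-\gep)^2 N/(2d)^2\Big]\le e^{-c(d,\gep)N}.
\end{equation}
Combining the two estimates, on the complement of an event of probability $\le e^{-c(d,\gep)N}$ we have $|B_N|\ge (1-\gep)^2N/(2d)^2-\gep N/(2d)^2\ge (1-2\gep)N/(2d)^2$ for $d$ large, which is the claim. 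Note the bound is uniform over $S\in\mathcal A$ since only $|U_N|\ge(1-\gep)N/(2d)$ was used.

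The main obstacle, and the only genuinely non-routine point, is the combinatorial claim that the edges $\{(S_n,S_{n-3}):n\in U_N\}$ are all distinct and, more importantly, that the ``separation'' condition involving $(S_{n-2},S_{n-5})$ really does guarantee the corresponding squares are usable simultaneously (no shared edges or free sites, cf.\ Figure \ref{overlapbri} case $(5)$) — so that $B_N$ as defined is a legitimate input to the path-counting lower bound \eqref{lowb}. Establishing this cleanly requires a short case analysis on the local geometry of a self-avoiding walk near a $U$-turn; everything else is bookkeeping plus two Chernoff bounds. One subtlety to flag: if $n-2\in U_N$ and $n\in U_N$ then the relevant edges $(S_n,S_{n-3})$ and $(S_{n-2},S_{n-5})$ share \emph{no} edge but the squares might share a vertex, hence the asymmetric rule of keeping $n$ and discarding $n-2$; I would verify that iterating this rule along a run of consecutive $U$-turn indices loses at most half of them, which is already absorbed in the $(1-2\gep)$ slack.
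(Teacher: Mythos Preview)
Your argument is correct but takes a somewhat different route from the paper. The paper observes that, given the spine, $|B_N|$ is a function of the $|U_N|$ independent Bernoulli variables $(\go_{(S_n,S_{n-3})})_{n\in U_N}$ which is $1$-Lipschitz for the Hamming distance (flipping one edge changes $|B_N|$ by at most one), and then applies McDiarmid's bounded-differences inequality in one shot: $\bbP[|B_N|\le\bbE|B_N|-x]\le e^{-2x^2/|U_N|}$, together with the easy lower bound $\bbE|B_N|\ge |U_N|(2d)^{-1}(1-(2d)^{-1})\ge(1-3\gep/2)N/(2d)^2$ for $S\in\cA$. Your decomposition $|B_N|=\sum_{n\in U_N}\ind_{E_n}-\sum_{n\in U_N,\,n-2\in U_N}\ind_{E_n\cap E_{n-2}}$ is in fact an exact identity, and controlling the two pieces separately by Chernoff works as you outline: the main term is genuinely Binomial once edge-distinctness is checked (needed in both approaches), and the correction has mean $O(N/(2d)^3)$ with dependence of range $2$, so splitting into independent blocks is routine. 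The McDiarmid route is shorter because it bypasses the correction term and the dependency bookkeeping entirely; your route makes the structure of $|B_N|$ more explicit. One remark: the geometric compatibility of the $(b)$-squares that you flag as the ``main obstacle'' is not actually needed for \emph{this} lemma, which is purely a cardinality estimate on $|B_N|$; that compatibility only enters later, when \eqref{mirex} is assembled from Lemmata \ref{badb}--\ref{badc}.
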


\begin{proof}

Given the spine, $|B_N|$ depends only on the state of the edges $\{ (S_n,S_{n-3})_{n\in U_N}\}$. Furthermore changing the state of one edge changes 
$|B_N|$ by at most one.
Thus $|B_N|$ is a $1$-Lipshitz function of $\{0,1\}^{|U_N|}$ equipped with the Hamming distance. Thus we can apply McDiarmid's inequality (see \cite{cf:McD} or \cite{cf:Mos}) and get that for all positive $x$

\begin{equation}
\bbP\left[ |B_N|-\bbE[|B_N|]\le -x \right]\le \exp(-2x^2/|U_N|).
\end{equation}
The result then follows by choosing 
$$x= \bbE\left[|B_N|\right]-(1-2\gep)N/(2d)^2.$$
We just have to check that $x$ is larger than some constant times $N$.
We do so by noticing that as $|U_N|\in \left[\frac{(1-\gep)N}{2d},\frac{(1+\gep)N}{2d}\right]$ (from the assumption $S\in \mathcal A$), and that for
$n\in U_n$, $\bbP(n\in B_N)$ is either equal to $(2d)^{-1}(1-(2d)^{-1})$ or $(2d)^{-1}$ and hence
\begin{equation}
\bbE[|B_N|]\ge |U_N|(2d)^{-1}(1-(2d)^{-1})\ge (1-3\gep/2)N/(2d)^2,
 \end{equation}
 if $d$ is large enough.
\end{proof}

Now, we deal with bridges of type $(a)$. For reasons exposed earlier we do not want our bridges to overlap with briges of type $(b)$ and thus we have to choose their location carefully. We define first a set $A^0_N$ where we can build the bridges. We call
 
\begin{equation}
A^0_N:=\{n\in [1,N-1] \ |\ (n-1)\in V_N^2,  n\notin T_N,\  [n,n+3]\cap U_N=\emptyset \},
\end{equation}
the condition $[n,n+3]\cap U_N=\emptyset$ is there to avoid interaction of bridges of type $(a)$ over $n$, with $n\in A^0_N$
with bridges of type $(b)$ (like case $(2)$ in Figure \ref{overlapbri}).

Then for $n\in A^0_N$, let $\chi_n$ be the variable which denotes if the bridge of length two that goes over $n$ is open:
\begin{equation}\label{chidef}
\chi_n:=\ind\{(S_{n-1},S_{n-1}+X_{n+1}) \text{ and }  (S_{n+1},S_{n-1}+X_{n+1}) \text{ are open for $\go$ }\}.
\end{equation}

 \medskip
 
 Then we set $A_ N$ to be our set of well located bridges of type $(a)$

\begin{equation}
A_N:=\{n\in  A^0_N \ | \chi_n=1 \text{ and }  \chi_{n-1}\ind_{\{(n-1)\in A^0_N\}}=0\}.
\end{equation}
Again the condition $\chi_{n-1}=0$ is there to avoid interaction between different bridges (here two bridges of type $(a)$). 
We prove 
\begin{lemma}\label{bada}
For any fixed $\gep$, for large enough $d$, there exists $c(d,\gep)$
such that for all $N$ large enough, for all $S\in \mathcal A$
\begin{equation}
\bbP [|A_N|\le (1-3\gep)N/(2d)^2]\le e^{-c(d,\gep)N}.
\end{equation}
\end{lemma}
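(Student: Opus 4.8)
The plan is to mimic the argument for Lemma \ref{badb}: express $|A_N|$ as a $1$-Lipschitz function of a finite collection of independent Bernoulli variables, apply McDiarmid's inequality, and then show that the expectation $\bbE[|A_N|]$ is at least $(1-\frac{5}{2}\gep)N/(2d)^2$ for $d$ large, so that the deviation $x=\bbE[|A_N|]-(1-3\gep)N/(2d)^2$ is a positive constant times $N$. The one genuinely new feature compared to Lemma \ref{badb} is that the variables $\chi_n$ live on disjoint pairs of edges (the two edges $(S_{n-1},S_{n-1}+X_{n+1})$ and $(S_{n+1},S_{n-1}+X_{n+1})$), and for $S\in\mathcal A$ — more precisely because $(n-1)\in V^2_N$ — these pairs of edges are genuinely distinct across $n\in A^0_N$, hence the $(\chi_n)_{n\in A^0_N}$ are independent. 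I would first record this independence carefully (the free site $S_{n-1}+X_{n+1}$ is at distance $2$ from the spine at the relevant times, so the two edges of the bridge over $n$ are not spine edges and are not shared with the bridge over any $n'\ne n$), and also note that $|A^0_N|\ge (1-2\gep)N$: indeed $|V^2_N|\ge(1-\gep)N$, $|T_N|\le\gep N$, and the set of $n$ with $[n,n+3]\cap U_N\ne\emptyset$ has size at most $4|U_N|\le 4(1+\gep)N/(2d)\le \gep N$ for $d$ large.

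Next, I would set up the concentration step. Conditionally on the spine, $|A_N|$ depends only on the states of the edges appearing in $\{\chi_n : n\in A^0_N\}$, a set of at most $2|A^0_N|\le 2N$ independent Bernoulli$((2d)^{-1})$ edges; changing the state of one edge flips at most one $\chi_n$, and flipping one $\chi_n$ changes $|A_N|$ by at most a bounded amount (at most $2$, since $n\in A_N$ depends on $\chi_n$ and on $\chi_{n-1}$, and $n+1\in A_N$ depends on $\chi_{n+1}$ and $\chi_n$). Hence $|A_N|$ is, say, $2$-Lipschitz on $\{0,1\}^{\le 2N}$ with the Hamming metric, and McDiarmid gives
\begin{equation}
\bbP\left[|A_N|-\bbE[|A_N|]\le -x\right]\le \exp\left(-\tfrac{x^2}{2N}\right).
\end{equation}
Taking $x=\bbE[|A_N|]-(1-3\gep)N/(2d)^2$ then yields the claim, provided $x\ge cN$ for some $c=c(d,\gep)>0$.

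It remains to lower bound $\bbE[|A_N|]$, which is the main computational point. For $n\in A^0_N$ one has $\bbP(\chi_n=1)=(2d)^{-2}$. The condition defining $A_N$ removes those $n$ for which either $n-1\notin A^0_N$ is irrelevant, or $n-1\in A^0_N$ and $\chi_{n-1}=1$; since $\chi_{n-1}$ and $\chi_n$ are independent (disjoint edge sets) and $\bbP(\chi_{n-1}=1)=(2d)^{-2}$, we get $\bbP(n\in A_N)\ge (2d)^{-2}(1-(2d)^{-2})$ for every $n\in A^0_N$. Summing over $A^0_N$ and using $|A^0_N|\ge(1-2\gep)N$,
\begin{equation}
\bbE[|A_N|]\ge (1-2\gep)N\,(2d)^{-2}\left(1-(2d)^{-2}\right)\ge (1-\tfrac{5}{2}\gep)N/(2d)^2
\end{equation}
for $d$ large enough, so that $x\ge \tfrac{\gep}{2}N/(2d)^2$, which is a positive constant times $N$ for fixed $d,\gep$. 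I expect the only delicate point to be the bookkeeping showing that the edge sets underlying distinct $\chi_n$ are disjoint (and disjoint from the spine), so that independence genuinely holds and the Lipschitz constant is bounded; this is exactly where the hypotheses $(n-1)\in V^2_N$ and $[n,n+3]\cap U_N=\emptyset$ built into the definition of $A^0_N$ are used, and once that is in place the rest is a routine repeat of the proof of Lemma \ref{badb}.
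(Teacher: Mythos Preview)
Your proposal is correct and follows essentially the same approach as the paper: establish that the $(\chi_n)_{n\in A^0_N}$ are independent by checking that the bridge edge sets are disjoint (using $(n-1)\in V^2_N$), apply McDiarmid, and lower-bound $\bbE[|A_N|]$ via $|A^0_N|\ge (1-O(\gep))N$ together with $\bbP(n\in A_N)\ge (2d)^{-2}(1-(2d)^{-2})$. The only cosmetic difference is that the paper applies McDiarmid directly to the family $(\chi_n)_{n\in A^0_N}$ (which it shows to be i.i.d.\ Bernoulli$((2d)^{-2})$, with $|A_N|$ then $1$-Lipschitz in these variables), whereas you parametrize by the underlying edge states and carry a Lipschitz constant of $2$; both yield an exponent of order $N/d^4$ and the argument goes through identically.
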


\begin{proof}

Given the spine, we  claim that the $(\chi_n)_{n\in A^0_N}$ are IID Bernoulli of parameter $(2d)^{-2}$.
The only thing to check it that there is no over overlap between the different bridges over $n$, $n\in A^0_N$.
For an edge to be used by two distinct bridges of type $(b)$, it must be for two bridges over $S_n$ and $S_{n+2}$.
For  an overlap to occur we need that the free sites of the two bridges to be the same i.e.\ $S_{n+1}+X_{n+3}=S_{n-1}+X_{n+1}$, or equivalently
$X_n=-X_{n+3}$. 
This implies $|S_{n-1}-S_{n+3}|=2$, and thus $(n-1)\notin V^2_N$ and hence $n\notin A^0_N$. For this reason, bridges over $S_n$,  $n\in A^0_N$  are edge disjoint.

\medskip

Given the spine $S$, $|A_N|$ 
is a $1$-Lipshitz function of $(\chi_n)_{n\in A^0_N}$ for the Hamming distance on $\{0,1\}^{|A^0_N|}$. Thus
applying McDiarmid's inequality \cite{cf:McD} (as the $\chi_n$ are IID)

\begin{equation}\label{troll}
\bbP\left( |A_N|-\bbE[|A_N|]\le -x\right)\le e^{-2x^2/|A^0_N|}.
\end{equation}
The result then follows by choosing 
$$x= \bbE\left[|A_N|\right]-(1-3\gep)N/(2d)^2.$$
It remains to check that this $x$ is proportional to $N$ by giving a lower bound on $\bbE[|A_N|]$.
By independence of the $\chi_n$, given the spine, for $n\in A_N^0$, $\bbP[n\in A_N]$ is either 
$\left(1-(2d)^{-2}\right)(2d)^{-2}$ if $n-1\in A^0_N$ or $(2d)^{-2}$ if not. Hence

\begin{equation}
\bbE[|A_N|]\ge |A_N^{0}|\left(1-(2d)^{-2}\right)(2d)^{-2}.
\end{equation}
For $S\in \mathcal A$
\begin{multline}\label{agro}
 |A_N^{0}|=\left|[1,N-1]\cap(V^2_N+1)\setminus\left(T_N\cup \left(\bigcup_{i=0}^3 U_N-i\right)\right)\right|\\
 \ge N-1-(N-|V^N_2|)-|T_N|-3|U_N|\ge N(1-3\gep/2),
\end{multline}
for large enough $d$.
Hence when  $d$ large enough
\begin{equation}
\bbE[|A_N|]\ge N(1-2\gep)(2d)^{-2},
\end{equation}
which is enough to conclude.
\end{proof}

Finally we treat the case of bridges of type $(c)$.
We do not want these bridges to overlap with eachother nor with bridges of type $(b)$ and $(a)$ and thus we must be careful about their location.
Define as before a set $C^0_N$ where we allow construction of bridges.
\begin{equation}
C^0_N:=\{n\in [0,N-1] \ | \{n,n+1\}\subset  V^2_N,\  [n+1,n+3]\cap U_N=\emptyset \}.
\end{equation}
The condition $\{n,n+1\}\subset  V^2_N$ is present because we do not want our bridges to overlap with some other distant bridge 
(of type $(a)$ or $(c)$ see $(1)$ or $(4)$ of figure \ref{overlapbri}),
it also guarantees that the bridges over $(S_n,S_{n+1})$ does not use edges or sites on the spine.  The condition
 $[n+1,n+3]\cap U_N=\emptyset$ is there to avoid interaction with bridges of type $(a)$.
 
However a bridge over the edge $(S_n,S_{n+1})$ and another bridge over the edge $(S_{n+1},S_{n+2})$ may overlap. 
We want to avoid this and set our definition of 
the set of good bridges accordingly.  For $n$ set $\xi_n$ the event that there exists an open bridge over $(S_n,S_{n+1})$ (recall that $I$ is the set of nearest neighbor in $\bbZ^d$)
\begin{multline}
\xi_n:=\ind\{\exists e\in I \setminus  \{-X_{n-1},-X_{n},X_{n+1},X_{n+2},X_{n+3}\} \\
\text{ $(S_{n},S_{n}+e)$, $(S_{n}+e,S_{n+1}+e)$ and $(S_{n+1}+e,S_{n})$ are open}\}.
\end{multline}
Note that the possibilities for the direction $e$ have been reduced to avoid overlap with bridges of type $(a)$ .
Define then the set of good bridges (recall \eqref{chidef})

\begin{equation}
C_N:=\{n\in C^0_N\ | \ \xi_n=1,\xi_{n-1}=\chi_n=\chi_{n+1}=0
\}.
\end{equation}
The condition $\xi_{n-1}=\chi_n=\chi_{n+1}=0$ is there to ensure that any open bridge that is selected does not overlap with one previously chosen in $C_N$ or $A_N$.
We want to prove

\begin{lemma}\label{badc}

For all $\gep$, for all $d$ large enough,
there exists $c(d,\gep)>0$ such that  for all $N$ large enough and
$S\in \mathcal A$, 
\begin{equation}
\bbP [|C_N|\le (1-5\gep)N/(2d)^2]\le e^{-c(d,\gep)N}.
\end{equation}
\end{lemma}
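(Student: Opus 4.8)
\textbf{Proof plan for Lemma \ref{badc}.}
The strategy mirrors that of Lemmata \ref{badb} and \ref{bada}: I will show that $|C_N|$ is a Lipschitz function of an appropriate family of (essentially) independent coordinates, apply a concentration inequality of McDiarmid type, and then check that the expectation $\bbE[|C_N|]$ is at least $(1-4\gep)N/(2d)^2$ for $d$ large, which makes the deviation gap $x = \bbE[|C_N|] - (1-5\gep)N/(2d)^2$ proportional to $N$. The extra care compared to the previous two lemmata is that, for $n \in C^0_N$, the event $\xi_n$ depends not on a single edge but on a whole family of candidate bridges (one for each admissible direction $e$), and bridges over consecutive edges $(S_n,S_{n+1})$ and $(S_{n+1},S_{n+2})$ can in principle share the edge $(S_{n+1},S_{n+1}+e)$ when the same $e$ is chosen; this is exactly why the definition of $C_N$ imposes $\xi_{n-1}=0$.

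\emph{Step 1: identify the relevant coordinates and their dependence structure.} Given the spine $S$, the variable $\xi_n$ is measurable with respect to the triple of edges $(S_n,S_n+e),(S_n+e,S_{n+1}+e),(S_{n+1}+e,S_n+e')$... more precisely the edges forming the square attached to $(S_n,S_{n+1})$ in direction $e$, for $e$ ranging over $I\setminus\{-X_{n-1},-X_n,X_{n+1},X_{n+2},X_{n+3}\}$. I first argue that when $n,n+1\in V^2_N$ none of these edges lies on the spine, and that for two indices $n<m$ in $C^0_N$ with $m\ge n+2$ the corresponding edge sets are disjoint: an overlap would force two of the sites $S_n+e, S_{n+1}+e, S_m+e', S_{m+1}+e'$ to coincide or to be adjacent, which (as in the proof of Lemma \ref{bada}) would put one of $n,n+1,m,m+1$ outside $V^2_N$ — here one uses $|S_n - S_m|>2$ whenever $|n-m|>2$. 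Hence the events $(\xi_n)_{n\in C^0_N}$ indexed by indices of the same parity depend on disjoint edge families, so $(\xi_n)$ for $n$ even (resp.\ odd) are independent; and $|C_N|$ is determined by the $\xi_n$ together with the $\chi_n$ (which, by Lemma \ref{bada}'s argument, involve yet another disjoint family of edges). Changing the state of one edge changes each $\xi_n$ and each $\chi_n$ by at most one, and each such toggle changes $|C_N|$ by a bounded amount, so $|C_N|$ is $O(1)$-Lipschitz in the underlying edge configuration restricted to the (of order $N$) relevant edges.

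\emph{Step 2: concentration.} Applying McDiarmid's inequality \cite{cf:McD} to $|C_N|$ as a bounded-Lipschitz function of these $O(N)$ independent edge variables gives $\bbP(|C_N|-\bbE[|C_N|]\le -x)\le \exp(-c x^2/N)$ for a suitable constant; taking $x=\bbE[|C_N|]-(1-5\gep)N/(2d)^2$ yields the claimed bound provided the gap is $\ge c'N$.

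\emph{Step 3: lower bound on $\bbE[|C_N|]$ — the main obstacle.} First, $\bbP(\xi_n=1)$: there are $2d - O(1)$ admissible directions, each giving an independent triple of edges open with probability $(2d)^{-3}$, so $\bbP(\xi_n=1) = 1-(1-(2d)^{-3})^{2d-O(1)} = (2d)^{-2}(1+o(1))$ as $d\to\infty$. The conditions $\xi_{n-1}=\chi_n=\chi_{n+1}=0$ each hold with probability $1-O((2d)^{-2})$ and are, by Step 1, independent of $\xi_n$ for $n\in C^0_N$, so $\bbP(n\in C_N)\ge (2d)^{-2}(1-o(1))$. Next I bound $|C^0_N|$ from below exactly as in \eqref{agro}: for $S\in\mathcal A$,
\begin{equation}
|C^0_N| \ge N - 2(N-|V^2_N|) - 3|U_N| \ge N(1-3\gep/2)
\end{equation}
for $d$ large. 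Combining, $\bbE[|C_N|]\ge |C^0_N|(2d)^{-2}(1-o(1)) \ge (1-4\gep)N(2d)^{-2}$ for $d$ large, so $x\ge \gep N(2d)^{-2}$, which is the required constant times $N$. The delicate point throughout is bookkeeping the various ``do not overlap'' constraints so that the coordinates really are independent and the asymptotic count $\bbP(\xi_n=1)\sim (2d)^{-2}$ is not degraded by more than the allotted $\gep$-budget; everything else is routine.
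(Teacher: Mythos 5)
Your overall architecture (McDiarmid concentration on the underlying edge variables together with a lower bound on $\bbE[|C_N|]$, followed by $|C^0_N|\ge(1-3\gep)N$ as in \eqref{agro}) is exactly the paper's route, and Steps 1--2 and the $|C^0_N|$ estimate are fine. There is, however, a genuine gap in Step 3: you claim that the event $\{\xi_{n-1}=0\}$ is ``by Step 1, independent of $\xi_n$.'' But your Step 1 only establishes disjointness of the relevant edge families for indices at distance at least $2$ (same parity), and it cannot be extended to consecutive indices: a $(c)$-bridge over $(S_{n-1},S_n)$ in direction $e$ and a $(c)$-bridge over $(S_n,S_{n+1})$ in the \emph{same} direction $e$ share the edge $(S_n,S_n+e)$, so $\xi_{n-1}$ and $\xi_n$ are genuinely correlated. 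The paper avoids asserting this false independence and instead bounds the joint probability directly: either the two bridges over $(S_{n-1},S_n)$ and $(S_n,S_{n+1})$ share an edge ($\le 2d$ choices, $5$ open edges, probability $\le 2d\cdot(2d)^{-5}$) or they are edge-disjoint ($\le(2d)^2$ choices, $6$ open edges, probability $\le(2d)^2\cdot(2d)^{-6}$), giving $\bbP(\xi_n=1,\xi_{n-1}=1)\le 2(2d)^{-4}$. Combined with the (separately checked) independence of $\chi_n,\chi_{n+1}$ from $(\xi_n,\xi_{n-1})$, this yields
\begin{equation}
\bbP(\xi_n=1,\,\xi_{n-1}=\chi_n=\chi_{n+1}=0)
=\bigl(\bbP(\xi_n=1)-\bbP(\xi_n=1,\xi_{n-1}=1)\bigr)\bigl(1-(2d)^{-2}\bigr)^2\ge(1-\gep)(2d)^{-2},
\end{equation}
which is the estimate you need. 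Note also that a crude union bound will \emph{not} rescue your argument, since $\bbP(\xi_{n-1}=1)$, $\bbP(\chi_n=1)$, $\bbP(\chi_{n+1}=1)$ are each of the same order $(2d)^{-2}$ as $\bbP(\xi_n=1)$; you really must exploit the independence of the $\chi$'s from the $\xi$'s and the $O(d^{-4})$ bound on the joint $\xi$-event. Finally, the ``independence within a parity class'' you prove in Step 1 is not actually used in your Step 2 (McDiarmid is applied to the independent edges, not to the $\xi_n$'s), so that part is redundant; the effort should instead have gone into the $\xi_n,\xi_{n-1}$ joint bound.
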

\begin{proof}

A computation similar to \eqref{agro} shows that for a fixed $\gep$, when the spine $S$ belongs to $\mathcal A$ and $d$ large enough
$$|C^0_N|\ge (1-3\gep)N.$$

For $n\in C_N^0$, $\bbP\left[n\in C_N\right] $ is not straight-forward to compute as it depends on the local configuration of the spine but one can easily get a bound on it. Notice
that by definition $\chi_n$ and $\chi_{n+1}$ are independent of eachother and of $(\xi_n,\xi_{n-1})$.
For $\xi_{n}$ and $\xi_{n-1}$ to be one simultaneously, there are two possibilities, either $5$ edges 
forming two joint bridges are open: there are at most $(2d)$ possibilities for this to occur, each one of probability $(2d)^{-5}$; 
or two disjoint bridges are open using a total of $6$ edges, there are at most $(2d)^2$ option for choosing the bridges and the probability of both being open is $(2d)^{-6}$ and 
hence using union bound for all these events we have

\begin{equation}
\bbP(\xi_n=1,\xi_{n-1}=1)\le 2(2d)^{-4}.
\end{equation}
As a consequence we have

\begin{multline}
\bbP(\xi_n=1, \xi_{n-1}=\chi_n=\chi_{n+1}=0)=(\bbP(\xi_n=1)-\bbP(\xi_n=1,\xi_{n-1}=1))(1-(2d)^{-2})^2\\
\ge 
\left(1-(1-(2d)^{-3})^{2d-5}-2(2d)^{-4}\right)(1-(2d)^{-2})^2\ge (1-\gep)(2d)^{-2},
\end{multline}
when $d$ is large enough, and

\begin{equation}
\bbE\left[|C_N|\right]\ge |C^0_N| (1-\gep)(2d)^{-2}\ge (1-4\gep)(2d)^{-2}.
\end{equation}

Given $S$, set $E_S$ to be the set of edges whose state might have an effect on $C_N$, that is
\begin{equation}
E_S:=\{ (S_n,S_n+e), n \in [0,N] \} \cup \{ (S_n+e,S_{n+1}+e), n\in [0,N-1]\}.
\end{equation}
As changing the state of one edge can only affect the state of finitely many bridges, $|C_N|$ is a Lipchitz function of 
$\{0,1\}^{E_S}$ ($\go$ restricted to $E_S$) with Lipchitz constant $K_d$.
Hence by McDiarmid's concentration inequality

\begin{equation}\label{troll2}
\bbP( |C_N|-\bbE\left[|C_N|\right]\le -x)\le e^{-x^2/ (2K_d^2|E_S|)}.
\end{equation}

Hence the result is proved by using the inequality for 
$$ x=  \bbE\left[|C_N|\right]-(1-5\gep)(2d)^{-2}N\ge \gep(2d)^{-2}N,$$
and  remarking that $|E_S|\le 4d N$.
 \end{proof}

\begin{proof}[Proof of Proposition \ref{bridgeconstructor} from Lemmata \ref{badb}, \ref{bada} and \ref{badc}]
First we check that 

\begin{equation}\label{mirex}
Z_N(\go,S)\ge 2^{|A_N|}\sum_{k=1}^{|B_N|\wedge |C_N|}\binom k{|B_N|}\binom k  {|C_N|}\ge  \frac{2^{|A_N|+2|B_N|\wedge |C_N|}}{C|B_N|\wedge |C_N|}.
\end{equation}
It is sufficient for this to exhibit an injective map from

\begin{equation}
\{(\alpha,\gb,\gga)\in \mathcal P(A_N)\times \mathcal P(B_N)\times \mathcal P(C_N)\ |\ |\gb|=|\gga|\},
\end{equation}
where $\mathcal P(A)$ is the set of subsets of $A$,
to 
$$\mathcal S_N(\tilde \go):=\{S\in \mathcal S_N\ | \ S \text{ is open for $\tilde \go$}\}.$$

The definition of the injection is rather straight-forward: to each $(\alpha,\gb,\gga)$ is associates the paths that uses the edge of the spines everywhere except
where the bridges in $\alpha$, $\gb$, $\gga$ are located (recall Figure \ref{superpath}). Because of the condition $|\gb|=|\gga|$ the length of the obtained path is indeed $N$.

\medskip

What there is to prove is that this construction is indeed possible, \textit{i.e.} that it is possible to use simultaneously all theses bridges and that the obtained path is self-avoiding
(see Figure \ref{overlapbri} for possible complications). 
Recall that when a bridge is open, it corresponds to a square of open edges for $\tilde \go$ and that free sites are sites of 
those square that do not belong to any of the square's spine edges (there is one free site for each bridge of type $a$ and two free site 
for each bridges of type $b$).

\medskip

To prove that our injection is well defined, we have to show is that two of these square never have an edge in common and that the 
free sites of a given square do not belong to another square nor to the spine. The reader can check that these assumptions are guaranteed 
by our definitions of $A_N$, $B_N$ and $C_N$. The fact that free sites do not belong to the spine is guaranteed by the fact that for 
$n$ in $A_N$, $(n-1)\notin V^1_N$, and for $n\in C_N$, $(n, n+1)\notin V^1_N$. We leave to the reader to check that there is no edge overlap thanks 
to our assumptions as this would be rather tedious to develop this point here.

\medskip

Now we combine the inequality \eqref{mirex} with the results of the three Lemmata:
when $S\in \mathcal A$, with probability larger than $1-3 e^{-c(\gep,d)}$, we have
$|A_N|\ ,|B_N|,\ |C_N|\ge (1-5\gep)N/(2d)^2=: d_N$, and hence \eqref{mirex} implies that for some appropriate constant $c$

\begin{equation}
Z_N\ge 2^{d_N}\binom{(d_N/2)}{d_N}^2\ge c 2^{3d_N}(d_N)^{-1}\ge 2^{3d_N(1-\gep)}\ge 2^{(1-6\gep)3/(2d)^2},
\end{equation}
provided that  $N$ large enough.

Let us now prove that \eqref{mirex} holds. 

\end{proof}
%
%
%
%
%

\section{Proof of Proposition \ref{gudespine}}

The overall strategy to prove the proposition is to prove the exponential decay (in $N$) of some bad events for walks that are Markovian and then 
prove that these Markovian walks are a good enough approximation of the self-avoiding walk (which is not Markovian).
Let us introduce notation for the Markovian approximation

Recall that $\pi_N$ to be the law of the self-avoiding walk of size $N$. Set $\pi^1$ to be the law of the non-backtracking walk, that is 
the walk for which at each step $S_{n+1}$ is chosen uniformly at random among the neighbors of $S_n$ except that the choice $S_{n-1}$ is not allowed.
Set $\pi^2_N$ the uniform law on path of length $N$ with no backtrack and no loops of length $4$ i.e.\ on
\begin{multline}
\mathcal S^4_N:= \{ (S_n)_{n\in [0,N]}\ | \ S_0=0,\ \forall n\in [0,N-1], |S_{n}-S_{n+1}|=1, \\
 \forall n \in[0,N], S_n\notin \{S_{n-2},S_{n-4}\}\}.
\end{multline}
and $\pi^2$ the Markovian nearest-neighbor walk that at each step, jumps to a random neighbor chosen uniformly at random in $$\{ x | x \sim S_n, x\ne 
\{ S_{n-1} S_{n-3}\} \  \}.$$

\medskip

Under $\pi^1$, $(S_{n})_{n\in[0,N]}$ is distributed uniformly at random among trajectories with no backtrack.
However, under $\pi^2$, it is not true that $(S_{n})_{n\in[0,N]}$ is distributed uniformly on $\mathcal S^4_N$.
On the contrary (this is easy to check), each trajectory $(S_{n})_{n\in[0,N]}$ has a probability proportional to (recall \eqref{un})
\begin{equation}\label{radpi}
\left(\frac{2(d-1)}{2d-1}\right)^{|U_{N-1}|}.
\end{equation}

We will use $\pi_1$ and $\pi^2_N$ as approximations of $\pi_N$.
The number of non-backtracking paths of length $N$ is equal to $2d(2d-1)^{N-1}$. From 
\cite{cf:FishSy}, (see also \cite{cf:Kes})
the number of paths of length $N$ with no four loops $|\mathcal S^4_N|$ satisfies 
\begin{equation}
|\mathcal S^4_N|=(1+o(1))C_{4,d} \mu^N_{4,d},
\end{equation}
where asymptotic is taken for $N\to \infty$ and $ \mu^N_{4,d}$ is a constant. Furthermore, the asymptotic development of $\mu_{4,d}$ in $d$ is given by
\begin{equation}
\mu_{4,d}:=\left(2d-1-\frac{1}{2d}+O(d^{-2})\right).
\end{equation}
Thus,  considering the asymptotic development of the connective constant $\mu_d$ \eqref{cracoucas}, one gets
that there exists a constant $C$ such that for $d$ large enough and $N\ge N_0(d)$

\begin{equation}\label{castor}
\begin{split}
 \pi^1\left((S_n)_{n\in[0,N]} \text{ is self avoiding }\right)&=\left(1-\frac{1}{(2d)^2}+O(d^{-3})\right)^N,\\
  \pi^2_N\left((S_n)_{n\in[0,N]} \text{ is self avoiding }\right)&=\left(1+O(d^{-3})\right)^N.
 \end{split}
\end{equation}

We want to show first that under $\pi_N$ the number of U-turns up to step $N$ is roughly $N/2d$ (like for the simple random walk), 
and that $|V^N_1|$ is at most twice of the same order. We also show that $|T_N|$ is small (recall \eqref{tn})
  
\begin{proposition}\label{vunetu}
For any fixed $\delta$, there exists $d_0$ such that for all $d\ge d_0$ the exists $c(d,\delta)$ such that for all $N\ge N_0(d)$,
\begin{equation}\begin{split}
 \pi_N(|T_N|\ge \delta N)&\le e^{-c(\delta,d)N},\\
  \pi_N\left(|V^1_N|\le  \left(1-\frac{1+\delta}{d}\right) N \right)&\le e^{-c(\delta,d)N},\\
  \pi_N\left(\left| |U_N|-\frac{N}{2d}\right|\ge \frac{\delta N}{2d}\right)&\le e^{-c(\delta,d)N}.
\end{split} \end{equation}
\end{proposition}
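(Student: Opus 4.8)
The plan is to transfer the three estimates from the Markovian walks $\pi^1$ and $\pi^2_N$ — for which the relevant quantities $|T_N|$, $|U_N|$ and $|V^1_N|$ are functionals of an underlying Markov chain and hence amenable to large-deviation bounds — to the true self-avoiding walk $\pi_N$, using the cost estimates \eqref{castor}. Concretely, I would proceed in three stages.

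\textbf{Step 1: Large deviations under the Markovian walks.} Under $\pi^1$, the increments $(X_n)_{n\ge 1}$ form a Markov chain on the $2d$ directions with uniform transitions among the $2d-1$ non-backtracking choices; under $\pi^2$ the chain is on pairs $(X_{n-1},X_n)$ and is again a finite irreducible Markov chain. For each of the three quantities I would note:
\begin{itemize}
\item $|T_N| = \sum_{n=1}^{N-1}\ind_{\{X_n=X_{n-1}\}}$ is an additive functional of this chain with mean $\sim N/(2d-1)$ (resp.\ a comparable quantity under $\pi^2$), so by the standard large-deviation principle for additive functionals of finite Markov chains (or by a direct Chernoff/martingale bound, since consecutive increments only weakly depend), $\pi^i(|T_N|\ge \delta N)\le e^{-c(\delta,d)N}$ once $1/(2d-1)<\delta$.
\item $|U_N| = \sum_{n=3}^N \ind_{\{S_n\sim S_{n-3}\}}$: whether $n\in U_N$ depends only on $(X_{n-2},X_{n-1},X_n)$, and under $\pi^1$ one computes $\pi^1(n\in U_N)=\tfrac1{2d-1}(1+O(d^{-1}))=\tfrac1{2d}+O(d^{-2})$; again this is an additive functional, and the two-sided deviation bound $\pi^1(||U_N|-N/2d|\ge \delta N/2d)\le e^{-c(\delta,d)N}$ follows.
\item $|V^1_N|$: write $n\notin V^1_N$ iff $\exists m$ with $|m-n|>1$ and $|S_m-S_n|\le 1$. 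The dominant contribution comes from short returns, i.e.\ $m=n\pm 3$ with $S_m\sim S_n$, which is exactly (a shift of) a $U$-turn; longer returns are exponentially rare in their length for the non-backtracking walk. So $N-|V^1_N| \le 2|U_N| + (\text{rare long returns})$, and the bound on $|V^1_N|$ reduces to the bound on $|U_N|$ plus a crude estimate on the number of self-intersections at range $\le 1$, which for $\pi^1$ is again controlled by a Markov-chain/branching-process comparison. This gives $\pi^1(|V^1_N|\le (1-\tfrac{1+\delta}{d})N)\le e^{-c(\delta,d)N}$ (the factor $2/2d = 1/d$ coming from the two neighbours $n\pm 3$).
\end{itemize}

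\textbf{Step 2: Transfer to $\pi_N$ via a change of measure.} Each of the events above, call it $\mathcal B_N$, is a subset of trajectory space. Since $\pi_N$ is the uniform measure on $\mathcal S_N \subset \mathcal S^4_N \subset \{\text{non-backtracking paths}\}$, for any event $\mathcal B_N$,
\[
\pi_N(\mathcal B_N) = \frac{|\{S\in\mathcal S_N : S\in\mathcal B_N\}|}{|\mathcal S_N|} \le \frac{|\{S\in\mathcal S^4_N : S\in\mathcal B_N\}|}{|\mathcal S_N|} = \pi^2_N(\mathcal B_N)\,\frac{|\mathcal S^4_N|}{|\mathcal S_N|}.
\]
By \eqref{castor} (second line), $|\mathcal S^4_N|/|\mathcal S_N| = (1+O(d^{-3}))^N$, which is $\le e^{\gep' N}$ for $d$ large; choosing $\gep'$ much smaller than the rate $c(\delta,d)$ obtained in Step 1 for $\pi^2_N$, the exponential decay survives. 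For the quantities most naturally handled under $\pi^1$ rather than $\pi^2_N$, I would instead use $\pi_N(\mathcal B_N)\le \pi^1(\mathcal B_N)\,/\,\pi^1((S_n)\text{ is self-avoiding})$, and the first line of \eqref{castor} gives $\pi^1(\text{SAW})^{-1} = (1-(2d)^{-2}+O(d^{-3}))^{-N} = e^{O(d^{-2})N}$, again a negligible correction compared with the rate of decay, which here is of order $N/(2d)^2$ times a constant — so one must check the constants line up, i.e.\ that the large-deviation rate beats $1/(2d)^2$. That is where a little care is needed: the deviation we ask for, e.g.\ $||U_N|-N/2d|\ge \delta N/2d$, has rate $\asymp \delta^2 N/(2d)$ by a quadratic (Gaussian-regime) estimate, which comfortably dominates $(2d)^{-2}N$ for fixed $\delta$ and large $d$. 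One can also pass through $\pi^2_N$ throughout and use that $\pi^2_N$ reweights $\pi^2$-trajectories by \eqref{radpi}, a factor in $[(2(d-1)/(2d-1))^{|U_{N-1}|},1]\subset[e^{-cN/d},1]$, again absorbable.

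\textbf{Step 3: Assemble.} Combining Steps 1–2 for each of the three events and taking $d_0$ large enough that all error factors are beaten, and $N_0(d)$ large enough for \eqref{castor} to hold, yields the three displayed inequalities of Proposition \ref{vunetu} with a common $c(\delta,d)>0$ (take the minimum of the three rates, halved to absorb the sub-exponential corrections).

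\textbf{Main obstacle.} The routine part is the additive-functional large deviations for finite Markov chains; the genuinely delicate point is \emph{bookkeeping the constants} when transferring to $\pi_N$ — specifically ensuring that the entropic cost of conditioning a non-backtracking or no-4-loop walk to be fully self-avoiding, which is itself of exponential order $e^{\Theta(N/(2d)^2)}$, is strictly smaller than the large-deviation rate for each bad event. For the $|U_N|$ and $|V^1_N|$ estimates the target deviation is of relative order $\delta$ around a mean of order $N/2d$, so the rate is of order $N\delta^2/(2d)$, which dominates $N/(2d)^2$; for $|T_N|$ the deviation is from a mean $\asymp N/(2d)$ up to $\delta N$, an even larger (order-$N$) rate. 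So the constants do line up, but this comparison — rather than any probabilistic subtlety — is the crux, and it is also the reason the statement is phrased with a generic small $\delta$ and "$d$ large enough'' rather than with explicit constants.
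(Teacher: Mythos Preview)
Your treatment of $|T_N|$ and of the lower tail of $|U_N|$ is essentially the paper's: under $\pi^1$ the relevant indicators are (after splitting $|U_N|$ into even and odd indices) sums of IID Bernoulli variables, Cram\'er gives a rate $\asymp \delta^2/d$, and this beats the $O(d^{-2})$ cost of conditioning on self-avoidance via \eqref{castor}. Your direct two-sided large-deviation bound for $|U_N|$ under $\pi^1$ is also valid for the upper tail (the paper instead deduces the upper tail from $U_N\subset W^1_N$, see below).

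The gap is in your argument for $|V^1_N|$. Writing $N-|V^1_N|\le 2|U_N|+(\text{long returns})$ is fine, but the count of long returns, i.e.\ of indices $n$ for which some $m$ with $|m-n|>3$ satisfies $|S_m-S_n|\le 1$, is \emph{not} an additive functional of a finite-state Markov chain: whether $n$ is a long-return index depends on the entire history $(S_0,\dots,S_n)$, not on a bounded window of increments. So neither the finite-Markov-chain LDP nor a ``branching-process comparison'' gives an exponential tail bound for this count under $\pi^1$ in any routine way, and your Step~1 does not actually deliver $\pi^1(|V^1_N|\le(1-\tfrac{1+\delta}{d})N)\le e^{-cN}$. (Under $\pi^1$ the walk may revisit neighbourhoods of its past arbitrarily often; nothing in the increment chain prevents this.)

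The paper circumvents this entirely by a stopping-time argument that bounds the \emph{intersection} with self-avoidance rather than the bad event alone. Setting $W^1_N:=\{n:\exists\, m<n-1,\ S_m\sim S_n\}$ (so that $N-|V^1_N|\le |W^1_N|+|\bar W^1_N|$ with $|\bar W^1_N|\stackrel{d}{=}|W^1_N|$ by time reversal), one defines $\tau_{k+1}:=\min\{j>\tau_k:\exists\, m<j-1,\ S_m\sim S_j\}$. Conditionally on $(S_n)_{n\le\tau_k}$ and on self-avoidance so far, at least one of the $2d-1$ options for $S_{\tau_k+1}$ hits a previously visited site, so
\[
\pi^1\bigl((S_n)_{n\le\tau_k+1}\text{ self-avoiding}\bigr)\le\Bigl(\tfrac{2d-2}{2d-1}\Bigr)^k,
\]
whence $\pi^1(\{|W^1_N|\ge \gep N\}\cap\{\text{self-avoiding}\})\le (1-\tfrac{1}{2d-1})^{\gep N}$. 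Dividing by $\pi^1(\text{self-avoiding})$ and using \eqref{castor} with $\gep=(1+\delta)/(2d)$ gives the result. This is the missing idea: rather than controlling near-returns under $\pi^1$, one exploits that many near-returns are themselves incompatible with self-avoidance.
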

 
\begin{proof}
 The case of $T_N$ is the simplest. 
 Set  $G_n:=\ind_{X_{n}=X_{n-1}}$. Then under $\pi_1$ the $G_n$ are IID Bernoulli variables of parameter $p=1/(2d-1)$ and $|T_n|=\sum_{n=1}^N G_n$.
 Hence from Cramer's Theorem (see e.g. \cite{cf:DZ} Exercise 2.2.23) one gets that for any $x>1/(2d-1)$ and any $\gep>0$
 \begin{equation}
 \pi^1\left(|T_N|/N\ge x\right)\le \exp\left(-N(h_p(x)-\gep)/2\right),
 \end{equation}
 where
 \begin{equation}\label{hp}
h_p(x):=x\log(x/p)+(1-x)\log((1-x)/(1-p)).
\end{equation}
If $\delta$ is fixed, when $d$ is sufficiently large $h_p(x)>2\delta$, so that for$N$ large enough
 $$\pi^1\left(|T_N|/N\ge x\right)\le \exp\left(-N\delta \right).$$
 Hence the result follows (with e.g.\ $c(d,\delta)=\delta/2$) from 
\begin{multline}
  \pi_N(|T_N|\ge \delta N)=\pi^1\left(|T_N|\ge \delta N\ | \ (S_n)_{n\in [0,N]} \text{ is self-avoiding } \right)\\
  \le \frac{\pi^1\left(|T_N|\ge \delta N\right)}{\pi^1\left( (S_n)_{n\in [0,N]} \text{ is self-avoiding } \right)} ,
 \end{multline}
 and \eqref{castor}.
 
 \medskip
 
The case of $U_N$ requires a bit more care.
We decompose $|U_N|$ into a sum of two terms. Set $H_n:=\ind_{\{S_{n}=S_{n-3}\}}$

\begin{equation}\label{uone}
|U_N|= \sum_{i=2}^{\lfloor (N-1)/2\rfloor} H_{2i}+\sum_{i=
1}^{\lfloor N/2\rfloor-1} H_{2i+1}=:U^1_N+U^2_N.
\end{equation}
 
 Note that under $\pi_1$, $U^1_N$ and $U^2_N$ are sums of IID Bernoulli random variables of parameter $r(d):=\frac{2(d-1)}{(2d-1)^2}$ (this is because $H_n$ is independent of $(S_k)_{k\le n-2}$).
 Thus using Cramer's Theorem one gets that for any $x<r(d)$ and any $\gep>0$ small enough, for large $N$
 \begin{equation}
\begin{split}\label{gross}
\pi_1\left((2U^1_N/N)<x\right)&\le \exp\left(-N(h_{r(d)}(x)-\gep)/2\right),\\
\pi_1\left((2U^2_N/N)<x \right)&\le \exp\left(-N(h_{r(d)}(x)-\gep)/2\right).
\end{split}
\end{equation}
where $h_{r(d)}$ is defined by \eqref{hp}.

For some fixed small $\delta$, one can get for large $d$ that
\begin{equation}
h_{r(d)}\left(\frac{1-\delta}{2d}\right)>\gd^2/8d.
\end{equation}
Hence choosing $\gep$ small enough and using that  
$$|U_N|/N<x \ \Rightarrow \ \left\{ 2U^1_N/N<x \text{ or } 2U^2_N/N< x \right\}$$
we have for $N$ marge enough
\begin{equation}
\pi_1\left(|U_N|/N<(1-\delta)/(2d)\right)\le \exp\left(-\delta^2 N/(16d)\right). 
\end{equation}
Finally we can conclude (with e.g.\ $c(d,\delta)=\delta^2/(32d)$) by using \eqref{castor} and 
\begin{multline}
  \pi_N(|U_N|/N<(1-\delta)/(2d))\\
  =\pi^1\left(|U_N|/N<(1-\delta)/(2d)\ | \ (S_n)_{n\in [0,N]} \text{ is self-avoiding } \right)\\
  \le \frac{\pi^1\left(|U_N|/N<(1-\delta)/(2d)\right)}{\pi^1\left( (S_n)_{n\in [0,N]} \text{ is self-avoiding } \right)}.
 \end{multline}
 The other bound for $|U_N|$ could be treated similarly but we will see that this is not needed.
 
 \medskip
 
 For the other inequality , let us define two sets whose union equals to $[0, N] \setminus V^N_1$,
 
 \begin{equation}\begin{split}
 W^1_N&:=\{n\in[0,N]\ | \ \exists m< n-1, S_m\sim S_n\},\\
 \bar W^1_N&:=\{n\in[0,N]\ | \ \exists m\in (n+1,N], S_m\sim S_n\}.
 \end{split}\end{equation}
 By invariance under time reversal of the self-avoiding walk, $|W^1_N|$ and $|\bar W^1_N|$ have the same law under $\pi_N$.
Note that  we also have 
  $$|V^1_N|\ge N+1- |W^1_N|-|\bar W^1_N|,$$
 so that it is sufficient to show that 
 \begin{equation}\label{oart}
 \pi_N\left(|W^1_N|\ge \frac{1+\delta}{2d}N\right)\le \exp(-c(d,\delta)N)/2.
 \end{equation}
Recall that

 \begin{equation}\label{astrucar}
 \pi_N(\{|W^1_N|\ge \frac{1+\delta}{2d}N\}) =\frac{\pi^1(|W^1_N|\ge \gep N\ \text{ and } (S_n)_{n\in[0,N]} \text{ is self avoiding })}{\pi^1\left( (S_n)_{n\in [0,N]} \text{ is self-avoiding } \right)}.
 \end{equation}
Hence to our purpose it is sufficient to have a good bound on the numerator.
Let us define $(\tau_n)_{n\ge 0}$ a sequence of stopping time by $\tau_0=0$ and
\begin{equation}
\tau_{n+1}=\min\{k\ge \tau_{n}+1\ | \exists m< k-1,\ S_m\sim S_k\}.
\end{equation}
They are the time at which $S$ comes to a neighborhood of its non-immediate past.
We want to show that

 \begin{equation}\label{croco}
  \pi^1((S_n)_{n\in [1,\tau_N+1]} \text{ is self-avoiding })\le \left(\frac{2(d-1)}{2d-1}\right)^{N}
  \end{equation}

Under $\pi^1$, $\tau_N$ is a stopping time so that by the strong Markov property, a.s.\ on the event "$(S_n)_{n\in[0,\tau_{N}]}$ is self avoiding",
\begin{equation}
 \pi^1(S_{\tau_N+1}\ne S_n,\ \forall n\le \tau_N\ | (S_n)_{n\in[0,\tau_{N}]})\le\left(\frac{2(d-1)}{2d-1}\right).
\end{equation}
This is because at least one out of the $(2d-1)$ available options for $S_{\tau_N+1}$ breaks self-avoidance.
Hence
\begin{multline}
   \pi^1((S_n)_{n\in [1,\tau_N+1]} \text{ is self-avoiding } \ |\  (S_n)_{n\in[0,\tau_{N-1}+1]} \ \text{ is self-avoiding })\\
   =  \pi^1( S_{\tau_N}+1\ne S_n,\ \forall n\le \tau_N\ | \ (S_n)_{n\in[0,\tau_{N-1}+1]} \ \text{ is self-avoiding })\le\left(\frac{2(d-1)}{2d-1}\right).
\end{multline}
Iterating this inequality $N$ times gives \eqref{croco}.

\medskip
 
As a consequence of \eqref{croco} we have
\begin{multline}
 \pi^1\left( \{(S_n)_{n\in [0,N]}  \text{ is self-avoiding }\}\cap \{ |W_1^N|\le \gep N\}\right)\\
 \le 
 \pi^1\left( \{(S_n)_{n\in [0,\tau_{\gep N-1}+1]}  \text{ is self-avoiding }\}\cap \{ \tau_{\gep N-1}\le N-1\}\right)\le \left(\frac{2(d-1)}{2d-1}\right)^{\gep N-1}.
\end{multline}
Then, combining this result with \eqref{astrucar} and \eqref{castor} we obtain
\begin{equation}\label{fopeo}
 \pi_N(|W_N^1|\le \gep N)\le \left(1-\frac{\gep}{2d}+\frac{1}{(2d)^2}+\gep O(d^{-2})+O(d^{-3})\right)^N.
\end{equation}
Choosing $\gep=(1+\delta)/(2d)$ for a fixed $\delta$, we get that for $d$ large enough \eqref{oart} holds.
This gives also the bound for upper-deviation of $|U_N|$ as $U_N\subset W^1_N$.

\end{proof}

The only statement that we need to complete the proof of Proposition \ref{gudespine} is that with large probability $|V^2_N|/N$ is equivalent to one when the asymptotic 
in $d$ is concerned.
This is considerably more complicated than for $|V_N^1|$ because in this case $\pi_1$ is not a fine enough approximation of the measure to get a conclusion.

\begin{proposition}
There exists a constant $C$ such that for all $d$ large enough there exists $c(d)$ such that for all $N\ge N_0(d)$ 
\begin{equation}
  \pi_N\left(|V^2_N|\le \left(1-\frac{C}{d}\right)N\right)\le e^{-c(d)N},
\end{equation}

\end{proposition}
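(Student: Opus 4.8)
The plan is to follow, for $|V^2_N|$, the same scheme as for $|V^1_N|$ in Proposition \ref{vunetu}, but replacing the non-backtracking walk $\pi^1$ by the finer Markovian approximation $\pi^2$; the point is that $\pi^2$ already forbids loops of length $4$, so the most delicate close returns (in which the walk revisits an exact site) are built into the reference walk rather than being rare events to be estimated, and that $\pi^2_N$ is uniform on $\mathcal S^4_N$ with $\pi^2_N\big((S_n)_{n\in[0,N]}\text{ self-avoiding}\big)=(1+O(d^{-3}))^N$ by \eqref{castor}.

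First I would reduce to a one-sided statement as in the proof of Proposition \ref{vunetu}. Put
$$W^2_N:=\{n\in[0,N]\ |\ \exists m<n-2,\ |S_m-S_n|\leq 2\},\qquad \bar W^2_N:=\{n\in[0,N]\ |\ \exists m>n+2,\ |S_m-S_n|\leq 2\}.$$
Then $|V^2_N|\geq N+1-|W^2_N|-|\bar W^2_N|$ and, by invariance under time reversal, $|W^2_N|$ and $|\bar W^2_N|$ have the same law under $\pi_N$, so it is enough to produce a universal constant $C$ and some $c(d)>0$ with $\pi_N\big(|W^2_N|\geq\tfrac{C}{2d}N\big)\leq\tfrac12 e^{-c(d)N}$. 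Since every self-avoiding path lies in $\mathcal S^4_N$ and $\pi^2_N$ is uniform there, $\pi_N(\cdot)\leq\pi^2_N(\cdot)/\pi^2_N\big((S_n)_n\text{ self-avoiding}\big)$, and the denominator is $(1+O(d^{-3}))^N$; moreover by \eqref{radpi}, using $1\leq\big(\tfrac{2d-1}{2(d-1)}\big)^{|U_{N-1}|}\leq\big(1+\tfrac1{2(d-1)}\big)^{N}$, one has $\pi^2_N(\cdot)\leq\big(1+\tfrac1{2(d-1)}\big)^{N}\pi^2(\cdot)$. Thus the whole matter reduces to an estimate for the genuinely Markovian $\pi^2$:
$$\pi^2\big(|W^2_N|\geq\tfrac{C}{2d}N\big)\leq e^{-c'(d)N}\qquad\text{with }c'(d)>\log\!\big(1+\tfrac1{2(d-1)}\big)+O(d^{-3}),$$
which we will be able to obtain by choosing the universal constant $C$ large enough.

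Proving this last inequality is the heart of the matter, and the main obstacle is the concentration part of it. The first moment is tractable: splitting a close return $(m,n)$ according to the gap $j=n-m$, the case $j=3$ is exactly the set $U_N$ (handled as in Proposition \ref{vunetu}), while for $j\geq4$ a local count of the admissible increment patterns --- which is where $\pi^2$ helps, since it kills the $j=4$, distance-$0$ return --- together with the high-dimensional estimate ``$S_j$ is within graph-distance $2$ of the starting point with probability $O(d^{-1})$ for $j\in\{3,4\}$ and $O(d^{-2})$ for $j\geq5$'' gives, by the strong Markov property, $\sum_n\pi^2(n\in W^2_N)=O(N/d)$. Upgrading this to exponential concentration is the real difficulty: $|W^2_N|$ is neither a monotone nor a local functional of the path, and --- unlike for $|V^1_N|$, where a return to distance $1$ forces the very next step of a self-avoiding walk to be blocked --- a return to distance $2$ carries only a penalty of order $d^{-2}$, so the stopping-time estimate \eqref{croco} used there is too weak. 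Instead I would work directly with the Markov structure of $\pi^2$: regroup the close-return times into maximal clumps, use transience of $\pi^2$ in high dimension to bound the clump sizes by a light-tailed law uniformly in the (dimension-dependent) memory of the chain, note that the number of clumps is $O(N/d)$ in expectation, and conclude by a Chernoff-type estimate for the sum of the clump sizes --- equivalently, bound the exponential moment of $|W^2_N|$ under $\pi^2$ by $e^{C(\theta)N/d}$ for $\theta$ close to $1$ via an iterated strong-Markov / last-exit decomposition and then apply Markov's inequality with $C$ large depending on $\theta$. The technical price to pay is to make the clump-size tail estimate uniform in the state of the chain, whose state space grows with $d$.
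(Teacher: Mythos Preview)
Your reduction to the one-sided quantity $W^2_N$ via time reversal and the passage $\pi_N\to\pi^2_N$ through \eqref{castor} match the paper. The divergence comes at the change from $\pi^2_N$ to the Markovian $\pi^2$: your pointwise bound $\partial\pi^2_N/\partial\pi^2\le(1+1/(2(d-1)))^N$ costs a factor $e^{O(N/d)}$, and it is precisely this cost that leads you to declare the stopping-time approach ``too weak'', since a penalty $(1-(2d-1)^{-2})^{\gep N}\approx e^{-\gep N/(2d)^2}$ with $\gep=O(1/d)$ cannot overcome $e^{N/(2d)}$.

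The paper does \emph{not} abandon the stopping-time argument. It keeps the intersection with the self-avoiding event and replaces your pointwise Radon--Nikodym bound by a second-moment one: via Cauchy--Schwarz and a short lemma bounding $\pi^2(x^{|U_N|})$, it obtains $\pi^2\big((\partial\pi^2_N/\partial\pi^2)^2\big)\le e^{O(N/d^3)}$. This gain of two powers of $d$ is exactly what makes the $d^{-2}$ penalty per stopping time sufficient. Concretely, the paper defines stopping times $\tau^2_n$ at which $S$ comes within distance $2$ of its non-immediate past (with side conditions $\{n,n-1\}\cap W^1_N=\emptyset$ and $n\ge\tau^2_{n-1}+2$, needed so that a two-step move hitting a past site is admissible for $\pi^2$), proves $\pi^2(\{\tau^2_{\gep N}\le N\}\cap\{\text{self-avoiding}\})\le(1-(2d-1)^{-2})^{\gep N}$, and transfers this to $\pi^2_N$ by Cauchy--Schwarz. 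The side conditions force an auxiliary inequality $|W^2_N|\le 2(\cT_N+|W^1_N|)$, which brings in the already-established bound on $|W^1_N|$.

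Your alternative route --- a direct large-deviation bound $\pi^2(|W^2_N|\ge CN/(2d))\le e^{-c'(d)N}$ with $c'(d)>1/(2d)$ --- is not implausible: the dominant contribution to $|W^2_N|$ is $|U_N|$, for which Cram\'er gives rate $\sim(C\log C-C+1)/(2d)$, exceeding $1/(2d)$ once $C>e$. But the clumping/exponential-moment control of the remainder (long-range returns, uniform clump-size tails over the growing state space of $\pi^2$) is only a sketch, and carrying it out would likely be at least as much work as the paper's Cauchy--Schwarz trick, which sidesteps all of it.
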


\begin{proof}

For the same reasons as in the proof of Proposition \ref{vunetu} (see \eqref{oart} and the few lines above it), we define
\begin{equation}
W^2_N:=\{n\in [0,N]\ | \ \exists m< n-2, S_m\sim S_n\},
\end{equation}
and focus on proving that 

\begin{equation}
  \pi_N\left(|W^2_N|\ge \frac{C}{d}N\right)\le e^{-c(d)N},
\end{equation}
(for a different constant $C$).

\medskip

We first want to prove a result concerning the measure $\pi^2$.
The reason why we do our proof for $\pi^2$ and not for $\pi_N^2$ is that having a Markovian walks is a useful tool for the proof.
The plan is then to transform it into a result for $\pi^2_N$, and finally to use \eqref{castor} to conclude.
To this purpose we define $\tau_N^2$  by $\tau^2_0=0$, and

\begin{equation}\label{deftau2}
\tau^{2}_{N+1}:= \min \left\{ n\ge \tau^2_N +2 \  |  \exists m< n-2, |S_n-S_m|=2; \{n,n-1\} \cap W^1_N=\emptyset \right\}.
\end{equation}
Approximately, $\tau^{2}$ is the sequence of time at which $S$ comes at distance two of its non-immediate past 
(it is not exactly right because of the extra conditions $n\ge \tau^2_N +2$ and $\{n,n-1\} \cap W^1_N=\emptyset$).
We want to prove that 

\begin{equation}\label{degueu}
\pi^2( (S_n)_{n\in[0,\tau^2_N+2]} \text{ is self-avoiding })\le \left(1-\frac{1}{(2d-1)^2}\right)^{N}.
\end{equation}

The idea is again to use the Markov property similarly as what we did to prove \eqref{croco}. Given a self-avoiding trajectory $(S_n)_{n\in[0,\tau^2_N]}$, one can choose some $m<\tau^2_N-2$ such that
$|S_{\tau^2_N}-S_m|=2$ (there might be several choices but we fix the value of $m$ for what follows e.g.\ we take the largest possible $m$).

There is at least one way to reach $S_m$ in two steps from $S_{\tau_N}$. We have to check that such a combination of two step is authorized for $\pi^2$, and in fact because 
$S_m\notin \{S_{\tau^2_N}, S_{\tau^2_{N}-2}\}$, it is sufficient to check the first step is not a backtrack and does not  form a loop of length four.
Because $\tau_N-1\notin W^1_N$, a step going to a neighbor of $S_m$ cannot be a backtrack, and as $\tau_N\notin W^1_N$ 
it cannot create a loop of length four in the first step anyway.

\medskip

As there are at most $(2d-1)$ possibilities for each step, we deduce that the possibility of reaching $S_m$ in two is at least 
$(2d-1)^{-2}$.
Averaging over all the possibilities for  $(S_n)_{n\in[0,\tau^2_N]}$, we deduce that

\begin{multline}\label{degeu}
\pi^2\left( (S_n)_{n\in[0,\tau^2_N+2]} \text{ is self-avoiding }\ | \ (S_n)_{n\in[0,\tau^2_{N-1}+2]} \text{ is self avoiding }\right)
\\ \le \pi^2\left( (S_n)_{n\in[0,\tau^2_N+2]} \text{ is self-avoiding }\ | \ (S_n)_{n\in[0,\tau^2_{N}]} \text{ is self avoiding }\right)\le 1-\frac{1}{(2d-1)^2}.
\end{multline}

 We obtain \eqref{degueu} by iteration of \eqref{degeu}.

\medskip

Now, because of the conditions  $\{n,n-1\} \cap W^1_N=\emptyset$ and $n\ge \tau^2_N +2$ in \eqref{deftau2}
the sequences $(\tau^2_N)$ is not the generalized inverse of $(|W^2_N|)$.
However \eqref{degueu} combined with additional results can still be useful to get a bound on the tail distribution of $|W^2_N|$.
Set 
\begin{equation}
\cT_N:=\max \{ n \ |\ \tau^2_n\le N \}.
\end{equation}
We have
\begin{equation}
|W^2_{N}|\le  2 (\cT_N+|W^1_N|).
\end{equation}
This is because if $n\in W^2_N$, either it is equal to one of the $(\tau^2_n)_{1\le n\le T_N}$ or $(\tau^2_{n-1})_{2\le n\le T_N+1}$
or it belongs to $( W^1_N+1)\cup W^1_N$.
Hence
\begin{equation}\label{astare}
\pi_N( |W^2_{N}|\ge 4\gep N)\le \pi_N(|\tau^2_{\gep N}|\le N)+\pi_N( |W^1_{N}|\ge \gep N).
\end{equation}
We want to get a bound on the right hand side for $\gep=5C/d$, and we know that the second term is small from \eqref{oart}.
We will use \eqref{degueu} to bound the first term.
Notice that for $N$ large enough
\begin{multline}\label{jugg}
\pi^2( \tau^2_{\gep N}\le N\ , \ (S_{n})_{n\in [0,N]} \text{ is self-avoiding })\\ \le
\pi^2((S_{n})_{n\in [0,\tau_{\gep N-1}+2]} \text{ is self-avoiding })
\\
\le
\left(1-\frac{1}{(2d-1)^2}\right)^{\gep N-1}\le \exp(-\gep N/(2d)^2).
\end{multline}

Then one wants to compare $\pi^2( \tau^2_{\gep N}\le N\ , \ (S_{n})_{n\in [0,N]} \text{ is self-avoiding })$ to the probability of the same event under $\pi^2_N$. Using the Cauchy-Schwarz inequality one gets 

\begin{multline}\label{gini}
\pi^2_N( \tau^2_{\gep N}\le N\ , \ (S_{n})_{n\in [0,N]} \text{ is self-avoiding })^2
\\ \le \pi^2\left(\left(\frac{\partial \pi^2_N}{\partial \pi^2}\right)^2\right) \pi^2( \tau^2_{\gep N}\le N\ , \ (S_{n})_{n\in [0,N]} \text{ is self-avoiding }),
\end{multline}
where $\pi(f)$ denotes the expectation of $f$ under the probability measure $\pi$.
We know that the second term of the above equation decays exponentially fast. We focus on showing that the first term does not increase so fast as to counterbalance it.
From \eqref{radpi} we have
\begin{equation}\label{cosmogol}
\pi^2\left(\left(\frac{\partial \pi^2_N}{\partial \pi^2}\right)^2\right) =\pi^2\left( \left(\frac{2d-2}{2d-1}\right)^{2|U_{N-1}|}\right) \left(\pi^2\left(\frac{2d-2}{2d-1}\right)^{|U_{N-1}|}\right)^{-2}.
\end{equation}
The second factor is not too hard to bound from below, using Jensen's inequality

\begin{equation}\label{micranots}
\pi^2\left(\left(\frac{2d-2}{2d-1}\right)^{|U_{N-1}|}\right)\ge\left( \frac{2d-2}{2d-1}\right)^{\pi^2(|U_{N-1}|)},
\end{equation}
and the expectation $\pi^2(|U_{N-1}|)$ is not difficult to approximate:
for $n\in [3,N-1]$ we have 
$\pi_2(n\in U_{N-1})\le \frac{1}{2d-2}$, because there are at least $2d-2$ choices for $S_n$ (at most $2$ are forbidden)
and at most one for which $n\in U_N$ is satisfied.
Hence 
$$\pi^2(|U_{N-1}|) \le \frac{N-3}{2d-2}.$$

The first one is is a bit more delicate to control.

\begin{lemma}
For all $x\le 1$ one has 
\begin{equation}
\pi^2(x^{|U_N|})\le \left(1+(1-x^2) \frac{2d-3}{(2d-1)^2}\right)^{N/2-2}.
\end{equation}
\end{lemma}

\begin{proof}
We use the decomposition \eqref{uone} and Cauchy-Schwarz to get
\begin{equation}
\pi^2\left(x^{|U_N|}\right)\le \sqrt{\pi^2\left(x^{2U^1_N}\right)\pi^2\left(x^{2U^2_N}\right)}.
\end{equation}
Then we remark that almost surely
\begin{equation}
\pi^2(S_{N}=S_{N-3} | (S_n)_{n\le N-2})\ge \frac{2d-3}{(2d-1)^2}
\end{equation}
Indeed there are $(2d-2)$ possibilities for making a $U$-turn and at most one of them is made unavailable by the condition that $4$ loops are not allowed.
The probability of a fixed combination of two available steps is at least $(2d-1)^{-2}$, and
hence, (recall $H_N:=\ind_{S_N\sim S_{N-3}}$)
\begin{equation}
\pi^2(H_N=1\ | \ (H_n)_{n\le N-2})\ge \frac{2d-3}{(2d-1)^2}.
\end{equation}
By a trivial induction (recall the definition \eqref{uone})
\begin{equation}
\pi^2\left(x^{2U^i_N}\right)\le \left(1- (1-x^2) \frac{2d-3}{(2d-1)^2}\right)^{N/2-2}.
\end{equation}

\end{proof}

The previous lemma together with equations \eqref{cosmogol} and \eqref{micranots}
\begin{equation}
 \pi^2\left(\left(\frac{\partial \pi^2_N}{\partial \pi^2}\right)^2\right)\le \left(\frac{2d-1}{2d-2}\right)^{\frac{N-3}{d-1}}
 \left(1+\left(1-\left(\frac{2d-2}{2d-1}\right)^4\right)\frac{2d-3}{(2d-1)^2}\right)^{N/2-2}.
\end{equation}
Performing Taylor expansions in $(2d)^{-1}$,
we see that
\begin{equation}
 \left(\frac{2d-1}{2d-2}\right)^{\frac{N-3}{d-1}}=\left(1+\frac{1}{2d^2}+O(d^{-3})\right)^N,
\end{equation}
and that 
\begin{multline}
 \left(1-\left(1-\left(\frac{2d-2}{2d-1}\right)^4\right)\frac{2d-3}{(2d-1)^2}\right)^{N/2-2}
\\ =\left(1+\frac{2}{d}\frac{1}{2d}(1+O(d^{-1})\right)^{N/2-2}=\left(1+\frac{1}{2d^2}+O(d^{-3})\right)^{N},
 \end{multline}
and we can conclude that there exists $C$ (which we might choose as large as we wish) such that for all $d$ large enough, and all $N\ge N_0(d)$
\begin{equation}
 \pi^2\left(\left(\frac{\partial \pi^2_N}{\partial \pi^2}\right)^2\right)\le \exp(CN/d^3).
\end{equation}
Recall that from \eqref{jugg}, fixing $\gep:=20C/d$ we have
\begin{equation}
\pi^2( \tau^2_{\gep N}\le N\ , \ (S_{n})_{n\in [0,N]} \text{ is self-avoiding })\le \exp(-\gep N/(2d)^2)\le  \exp(-5C/d^3).
\end{equation}
This allows us to conclude from \eqref{gini} that

\begin{equation}
\pi^2_N( \tau^2_{\gep N}\le N\ , \ (S_{n})_{n\in [0,N]} \text{ is self-avoiding })\le \exp(-2NC/d^3).
\end{equation}
Finally we combin \eqref{castor} and
\begin{equation}
\pi_N( \tau^2_{\gep N}\le N)=\frac{\pi^2_N( \tau^2_{\gep N}\le N\ , \ (S_{n})_{n\in [0,N]} \text{ is self-avoiding })}{\pi^2_N( (S_{n})_{n\in [0,N]} \text{ is self-avoiding })},
\end{equation}
and provided $C$ has been chosen large enough, we obtain that

\begin{equation}
\pi_N( \tau^2_{\gep N}\le N)\le  \exp(-NC/d^3).
\end{equation}

Going back to \eqref{astare} this implies the desired result once combined with \eqref{fopeo}.

\end{proof}

\end{document}